\author[Kim]{Bumsig Kim}
\address{Korea Institute for Advanced Study\\
85 Hoegiro \\
Dongdaemun-gu\\
Seoul 02455\\
Republic of Korea}
\email{bumsig@kias.re.kr }
\author[Oh]{Jeongseok Oh}
\address{
Department of Mathematics \\
Imperial College \\
London SW7 2AZ \\
United Kingdom
}
\email{j.oh@imperial.ac.uk}
\newtheorem{Thm}{Theorem}[section]
\newtheorem{Conj}[Thm]{Conjecture}
\newtheorem{Def/Thm}[Thm]{Definition/Theorem}
\newtheorem{Cor}[Thm]{Corollary}
\newtheorem{Lemma}[Thm]{Lemma}
\theoremstyle{definition}
\newtheorem{Rmk}[Thm]{Remark}
\newtheorem{example}{Example}
\numberwithin{equation}{section}
\newcommand{\ti }{\times}
\newcommand{\ot }{\otimes}
\newcommand{\ra }{\rightarrow}
\newcommand{\Hom }{{\mathrm{Hom}}}
\newcommand{\Spec}{{\mathrm{Spec}}}
\newcommand{\Res}{{\mathrm{Res}}}
\newcommand{\Ker}{{\mathrm{Ker}}}
\newcommand{\Sym}{{\mathrm{Sym}}}
\newcommand{\rank }{{\mathrm{rank}}}
\newcommand{\cA}{{\mathcal{A}}}
\newcommand{\cO}{{\mathcal{O}}}
\newcommand{\cE}{{\mathcal{E}}}
\newcommand{\cF}{{\mathcal{F}}}
\newcommand{\cH}{{\mathcal{H}}}
\newcommand{\cP}{{\mathcal{P}}}
\newcommand{\cV}{{\mathcal{V}}}
\newcommand{\cT}{{\mathcal{T}}}
\newcommand{\fC}{{\mathfrak{C}}}
\newcommand{\G}{{\bf G}}
\newcommand{\PP }{{\mathbb P}}
\newcommand{\GG }{{\mathbb G}}
\newcommand{\QQ }{{\mathbb Q}}
\newcommand{\CC }{{\mathbb C}}
\newcommand{\ZZ }{{\mathbb Z}}
\newcommand{\RR }{{\mathbb R}}
\newcommand{\ke }{{\varepsilon }}
\newcommand{\kb }{{\beta}}
\newcommand{\ka }{{\alpha}}
\newcommand{\ch}{\mathrm{ch}}
\newcommand{\fM}{\mathfrak{M}}
\newcommand{\lan}{\langle}
\newcommand{\ran}{\rangle}
\newcommand{\uf}{\underline{f}}
\newcommand{\pr}{\mathrm{pr}}
\newcommand{\fB}{\mathfrak{B}}
\newcommand{\vir}{\mathrm{vir}}
\newcommand{\td}{\mathrm{td}}
\newcommand{\kl}{\lambda}
\newcommand{\cB}{\mathcal{B}}
\newcommand{\spp}{\mathrm{Sp}}
\newcommand{\Zs}{Z(s)}
\newcommand{\fBo}{\fB ^{\circ}}
\newcommand{\RRes}{\RR\mathrm{es} }
\newcommand{\Zb}{Z(\beta)}
\newcommand{\LGQ}{LGQ}
\newcommand{\Coker}{\mathrm{Coker}}
\newcommand{\tot}{\mathrm{tot}}
\newcommand{\dwq}{dw_{\LGQ '}}
\newcommand{\UPQ}{U_{PQ}}
 \newcommand{\bfA}{{\mathbf A}}
 \newcommand{\bfB}{{\mathbf B}}
\begin{document}

\title{Localized Chern Characters for 2-periodic complexes}

\begin{abstract}
For a two-periodic complex of vector bundles, Polishchuk and Vaintrob have
constructed its localized Chern character. 
We explore some basic properties of this localized Chern character. 
In particular, we show that the cosection localization defined by Kiem and Li is equivalent to
a localized Chern character operation for the associated two-periodic Koszul complex, strengthening 
a work of Chang, Li, and Li. We apply this equivalence to the comparison of virtual classes
of moduli of $\ke$-stable quasimaps and moduli of the corresponding LG $\ke$-stable quasimaps, in full generality. 
\end{abstract}

\keywords{two periodic complexes, localized Chern characters, cosection localizations, virtual classes, LG (quasi)maps with $p$-fields}

\subjclass[2020]{14N35(primary), and 53D45(secondary)}

\maketitle

\setcounter{tocdepth}{1}
\tableofcontents


\section{Introduction}
\subsection{Main Results}
Let $Y$ be a finite type Deligne-Mumford stack over a fixed base field $\bf k$ and let
 $X\xrightarrow{i}Y$ be the inclusion of a closed substack $X$ of $Y$.
Let $E^{\bullet}$ be a 2-periodic complex
of vector bundles, which is exact off $X$:
 $$[ \xymatrix{   E^{-}  \ar@/^.2pc/[r]^{d^{-} }  
&  \ar@/^.2pc/[l]^{d^{+} } E^{+}  }] = ... \xrightarrow{d^+} E^{-} \xrightarrow{d^{-}} E^+ \xrightarrow{d^+} E^{-} \xrightarrow{d^{-}}  ... $$ 
$E^+$ is in even degree and $E^-$ is in odd degree.
 Suppose that $\Ker\, d^{-}$ and $\Ker\, d^{+}$ restricted to $Y - X$
  are vector bundles.

In paper \cite{PV:A} Polishchuk and Vaintrob   define a bivariant class 
$$\ch ^Y_X (E^{\bullet}) \in A^*(X \xrightarrow{i} Y)_{\QQ}$$ generalizing the localized Chern characters constructed by
Baum, Fulton, and MacPherson  \cite{BFM}. For each $[V]\in A_*(Y)_{\QQ}$, this  assigns a class
\[ \ch ^Y_X (E^{\bullet}) \cap [V] \in A_*(X)_{\QQ} \] 
whose image in $A_*(Y)_{\QQ}$ coincides with $\ch(E^+) \cap [V] - \ch (E^-)\cap [V]$.

\medskip

Polishchuk and Vaintrob \cite{PV:A} use the generalized localized Chern characters to define 
Witten's top Chern class. This is a particular case of pure Landau-Ginzburg phases in gauged linear sigma model. H.-L. Chang, J. Li and W.-P. Li also define Witten's top Chern class via cosection 
localization. They show that both constructions coincide; see \cite[Proposition 5.10]{CLL}.
This is a special case of the equivalence that a cosection localization  
of Kiem-Li \cite{KiemLi} is the localized Chern character for the associated 2-periodic Koszul complex.
We prove the following equivalence.   
  
Let $p: F\to M$ be a vector bundle on a DM stack $M$ 
and consider a cosection $\sigma \in H^0(M, F^{\vee})$ of $F$. It 
induces a function $w_{\sigma} : F \to \mathbb{A}^1$. Denote by $Z(w_{\sigma}) \subset F$ and $Z(\sigma) \subset M$
the zero loci of $w_{\sigma}$ and $\sigma$, respectively. 

\begin{Thm}\label{Thm1} Let  $0^!_{F, \sigma }$ denote the cosection localization in $A_*(Z(\sigma) \to Z(w_{\sigma}))_{\QQ}$ and let
$\{p^*\sigma , t_F\}$ be the Koszul complex of the pair of cosection $p^*\sigma$ and the tautological section $t_F$ of $p^*F$. Then
\[ 0^!_{F, \sigma } = \td F |_{Z(\sigma )}  \cdot \ch ^{Z(w_{\sigma})}_{Z(\sigma)} (\{p^*\sigma , t_F\}) . \]
\end{Thm}

 Also by this approach 
 we may define the virtual structure sheaves  and study the comparisons of those defined by \cite{YPLee} and \cite{KiemLi2}, respectively. 
 This is treated in \cite{OS}.

\medskip
We apply Theorem \ref{Thm1} to the comparisons  
of the following virtual classes. 

Let $V_1$ be a vector space with the standard diagonal action by the multiplicative group $\mathbb{G}_m$ so that $\PP V_1 = [V_1 - \{ 0\} /\mathbb{G}_m]$, the space 
of 1-dimensional subspaces of $V_1$.
Let $V_2$ be a $\mathbb{G}_m$-space, a vector space with a linear action by $\mathbb{G}_m$. 
Consider a $\mathbb{G}_m$-invariant element  $w$ of $(\Sym ^{\bullet} V_1^\vee) \ot V_2^\vee$.
Let $E = [(V_1 - \{ 0 \} \times V_2) / \mathbb{G}_m] $, which is 
 a vector bundle on $\PP V_1$. Then $E$ has a cosection associated to $w$.
This cosection amounts to a function $\underline{w} : E \to \mathbb{A}^1$ which is linear in
fiber coordinates of $E$. 

In paper \cite{CL} H.-L. Chang and J. Li  introduce a moduli space $LGQ^{\infty} _{g} (E,  d)' $ of unpointed genus $g$, degree $d$, stable maps to a complex projective space
$\PP V_1$ with $p$-fields and construct a cosection $dw_{LGQ'}$ of the obstruction sheaf 
and a virtual class $[ LGQ^{\infty}_{g} (E,  d)']^{\vir}_{dw_{LGQ'}}$ 
via cosection localization. 
This is a particular case of geometric phases in gauged linear sigma model. Let $Z(d\underline{w}) \subset E$ denote the critical locus of $\underline{w}$.
When $E$ is the line bundle $\mathcal{O}_{\PP ^4}(-5)$ with $Z(d\underline{w})$ a smooth quintic hypersurface, 
Chang and Li show that for $d\ne 0$,  the degree of $[ LGQ^{\infty}_{g} (E,  d)']^{\vir}_{dw_{LGQ'}}$ 
coincides with, up to an explicit sign, 
the degree of the virtual class $[Q^{\infty}_{g} (Z(d\underline{w}), d)]^{\vir}$ of the moduli space
 $Q^{\infty}_{g} (Z(d\underline{w}), d)$ of unpointed genus $g$, degree $d$, stable maps to the quintic.
We prove the following generalization of it.

For any geometric gauged linear sigma model $(V=V_1\oplus V_2, G, w)$ (see \S 3.1)
and any $\ke \in \QQ _{>0}$, we have
the cosection localized virtual class $[LGQ^{\ke}_{g, k} (E, d)']^{\vir}_{dw_{LGQ'}}$
of the moduli space $LGQ^{\ke}_{g, k} (E,  d)'$ of $\ke$-stable quasimaps to $V_1/\!\!/G$ with $p$-fields 
and the virtual  class  $[Q^{\ke}_{g, k} ( Z(d\underline{w}) , d)]^{\vir}$  of the moduli space $Q^{\ke}_{g, k} (Z(d\underline{w}), d)$ of $\ke$-stable quasimaps to $Z(d\underline{w})$.

\begin{Thm}\label{Comp:Thm} 
In $A_*(Q^{\ke}_{g, k} ( Z(d\underline{w}) , d))_{\QQ}$,
\begin{align*}\label{Vir: Eq} [Q^{\ke}_{g, k} ( Z(d\underline{w}) , d)]^{\vir} = (-1)^{\chi ( \cV ^{\vee}_2 ) } [LGQ^{\ke}_{g, k} (E, d)']^{\vir}_{dw_{LGQ'}} 
\end{align*}
where $\chi ( \cV ^{\vee}_2 )$ is the virtual rank of  the complex on  $LGQ^{\ke}_{g, k} (E, d)'$ induced from $V_2$ (see Conjecture \ref{conj}). 
\end{Thm}


\subsection{Acknowledgments}  B. Kim would like to thank Yongbin Ruan for drawing his attention to the comparison question of 
virtual classes, Andrei Okounkov for stimulating comments, and Arkady Vaintrob for answering a question.
The authors would like to thank Ionu\c t Ciocan-Fontanine, Tom Graber and Taejung Kim for helpful comments in shaping the paper.
This material is based upon work supported by NSF grant DMS-1440140 while the first author was in residence at
MSRI in Berkeley during Spring 2018 semester.
J. Oh would like to thank Sanghyeon Lee for useful discussions and University of California, Berkeley for excellent working conditions.
B. Kim is partially supported by KIAS individual grant MG016404.
J. Oh is partially supported by KIAS individual grant MG063002.


\subsection{Conventions} 
By a vector bundle $E$ on an Artin stack $Y$, we will mean a locally free coherent sheaf on $Y$.
We will also call its total space $\tot E$ the vector bundle and often denote it by $E$ if there is no danger to be misunderstood. 
For a morphism $f: X\to Y$ between Artin stacks, $E|_X$ denotes the pullback vector bundle $f^*E$.

\section{Localized Chern Characters of Koszul Complexes}

In this section, we briefly recall the definition of localized Chern characters for 2-periodic complexes and
introduce tautological Koszul complexes attached to Koszul 2-periodic complexes. Then we will show that the cosection localization
coincides, up to the Todd factor, with the localized Chern character for the associated tautological Koszul complex (i.e., Theorem \ref{Thm1}).

\subsection{Definition}
By a {\em 2-period complex  of vector bundles} on $Y$, we mean a $\ZZ/2$-graded vector bundle $E^{\bullet} = E^+\oplus E^-$ on $Y$ with
an odd degree vector bundle map $d_E : E^{\bullet} \to E^{\bullet}$ such that $d_E^2=0$.  Here 
the even degree part of $E^\bullet$ is denoted by $E^+$ and the odd degree part of $E^{\bullet}$ is denoted by $E^{-}$.
We write $d^{\pm}_E := d_E |_{E^{\pm}} : E^{\pm} \to E^{\mp}$. 
When it is clear, we suppress the subscript $E$ writing simply $d$, $d^{\pm}$ for $d_E$, $d^{\pm}_E$, respectively.
 A morphism $f: E^{\bullet} \to F^{\bullet}$ from $E^{\bullet}$ to another  2-periodic complex $F^{\bullet}$ is a degree preserving $\cO_Y$-module homomorphism such that
$f \circ d_E = d_F \circ f$. We will say that $f$ is an {\em isomorphism} if there is another morphism $g: F^{\bullet} \to E^{\bullet}$ such that
$f \circ g = \mathrm{id}_F$, $g\circ f = \mathrm{id}_E$.

Let $X$ be a closed substack of $Y$ and denote by  $i: X \to Y$ the immersion.
We call $E$ {\em strictly exact off} $X$ if $E^{\bullet}$ is exact off $X$
and both sheaves $\Ker\, d^{\pm}$ restricted to $Y - X$ are locally-free coherent sheaves.
Here by that $E^{\bullet}$ is exact off $X$, we mean that  the natural maps $\mathrm{Im} (d^{\mp}|_{Y-X}) \to \Ker (d^{\pm}|_{Y-X} )$ 
are isomorphisms.

We recall the definition of localized Chern characters 
\[ \ch^Y_X (E^{\bullet}) \in A^* (X\xrightarrow{i} Y)_{\QQ} \]
for a 2-period complex $E^{\bullet}$ of vector bundles which is strictly exact off $X$; see \cite{PV:A}.
We refer Chapter 17 of \cite{Ful} for the definition of the group $A^*(X\xrightarrow{i} Y)_{\QQ}$ of bivariant classes for the map $i$. 
For each morphism $g: Y' \to Y$ from a DM stack $Y'$ to $Y$, we need to define 
$$\ch^Y_X (E^{\bullet})_g : A_*(Y')_{\QQ} \to A_*(X')_{\QQ}, \  \gamma \to \ch^Y_X (E^{\bullet})_g \cap \gamma $$
where $X':= X \ti _Y Y'$. If understood, we will drop the subscript $g$ in the notation $\ch^Y_X (E^{\bullet})_g$.

First consider the case when $g=\mathrm{id}$.
For a cycle $j: V \ra Y$ defined by 
an integral closed substack $V$ 
of a finite type DM stack $Y$ (see Gillet \cite{Gillet} and Vistoli \cite{Vis} for the definition of the Chow group $A_*(Y)_{\QQ}$ of $Y$ with rational coefficients), 
we let $$\ch ^Y_X (E^{\bullet}) \cap [V] = j'_*  (\ch ^{V}_{V\times _Y X} (j^*E^{\bullet} ) \cap [V]) , $$
where $j': V\times _Y X \ra X$ is the induced inclusion.
Hence, it is enough to define the localized Chern character with assumption that $V=Y$ and $Y$ is irreducible.
When $X=Y$, we define 
\begin{eqnarray}\label{eq:easy case} \ch ^Y_Y (E^{\bullet}) = \ch (E^{+}) - \ch (E^{-}) : A_*(Y)_{\QQ} \to A_*(Y)_{\QQ} \end{eqnarray}
by sending $\gamma$ to $\ch (E^{+}) \cap \gamma - \ch (E^{-}) \cap \gamma$.

When $X\ne Y$, we consider a graph construction for the homomorphism $(d^{+}, d^{-})$ as follows.
Let $r$ be the rank of $E^{+}$. Note that the rank of $E^{-}$ is also $r$. 
Denote by $\GG$ the Grassmann bundle $\mathrm{Gr}_{r}(E^{+}\oplus E^{-})$ of $r$-planes in $E^{+}\oplus E^{-}$.
Consider the projection $$ \pi: \GG \times _Y \GG \times \mathbb{A} ^1 \to Y \times \mathbb{A}^1$$
and an its section 
\begin{eqnarray*} \varphi : Y\times \mathbb{A}^1 & \ra & \GG\times _Y \GG  \times \mathbb{A} ^1 \\
               (y, \lambda) & \mapsto &  (\mathrm{graph}(\lambda d^{+}(y)), \mathrm{graph}(\lambda d^{-}(y))  . \end{eqnarray*}
                       Let $\Gamma$ be the closure of $\varphi (Y\times \mathbb{A}^1)$ in  $\GG \times _Y \GG \times \mathbb{P} ^1$, and
let $$i_{\infty} : \GG \times _Y \GG \times \{\infty\} \hookrightarrow  \GG \times _Y \GG \times \PP ^1$$ be the inclusion.
There is a distinguished component $\Gamma_{\infty, dist}$ of  $\Gamma _{\infty} := \Gamma \ti _{\PP ^1} \{\infty \}$ which birationally projects to $Y$.
The remained components of $\Gamma_{\infty}$ project into $X$. Let $\xi _+$, $\xi _-$ be tautological subbundles on $\GG \times _Y \GG \times \PP ^1$
from each component $\GG$. We consider $\xi := \xi _+  - \xi _-$ as an element in the Grothendieck group of vector bundles  on $\GG \times _Y \GG \times \PP ^1$.
Define 
\begin{eqnarray}\label{eqn:def local ch}  \ch^Y_X(E^{\bullet} )\cap [Y] = \eta _* (\ch (\xi ) \cap ([\Gamma _{\infty}] - [\Gamma_{\infty, dist}  ] ) ), \end{eqnarray}
where $\eta$ is the restriction of the projection $\GG \times _Y \GG \ra Y$ to the inverse image of $X$ under the projection.
Now, for a general morphism $g: Y' \to Y$, define  $\ch^Y_X (E^{\bullet})_g = \ch^{Y'}_{X'} (g^*E^{\bullet})$.

\begin{Rmk}
1. We have a natural projection map $\Gamma_{\infty, dist} \to Y$.
Note that $\Gamma_{\infty, dist}$ restricted to any point $y$ of $Y-X$ is the diagonal point $$ (\Ker\, d^{+}|_{y}\oplus \Ker\, d^{-}|_{y}) \times _Y (\Ker\, d^{+}|_{y}\oplus\Ker\, d^{-}|_{y})$$
 in $\mathbb{G} \ti _Y \mathbb{G}$.
Therefore $\Gamma_{\infty, dist}$ is contained in the diagonal sublocus of $\mathbb{G}\ti _Y \mathbb{G}$.
 Hence $\xi |_{\Gamma _{\infty, dist}} =0$ and 
 the definition \eqref{eqn:def local ch} is justified. 
 
2. Note that $i_* \ch^Y_X (E^{\bullet} ) = \ch (E^{+}) - \ch (E^{-}) $ by Proposition 2.3 (i) of \cite{PV:A}
and \eqref{eq:easy case}. Hence we may regard
$\ch^Y_X (E^{\bullet} )$ as a \lq\lq localized Chern character'' of $E^{\bullet}$.

\end{Rmk}

\subsection{Koszul Complex}\label{Kos}
Koszul complexes yield ample examples of 2-periodic complexes. 
Let $E$ be a vector bundle on $Y$ with sections $\ka \in H^0 (Y, E^\vee)$, $\kb \in H^0(Y,  E )$
such that the pairing $\lan \ka , \beta \ran \in \Gamma( Y, \cO_Y) $ vanishes.
Let $\{\ka , \kb\}$ denote the 2-periodic complex 
\[  \xymatrix{   \oplus _k \bigwedge ^{2k-1} E^{\vee }  \ar@/^.2pc/[r]^{ \alpha\wedge + \iota_\beta  }  
&  \ar@/^.2pc/[l]^{ \alpha \wedge +  \iota_\beta }  \oplus_k \bigwedge ^{2k} E^{\vee }  }
 \]
of vector bundles. Here $\iota _\beta$ is the interior product, i.e., the contraction by $\beta$ defined as 
$\iota_{\beta} (v_1\wedge ... \wedge v_k) = \sum_i (-1)^{i+1} \lan v_i, \beta\ran   \, v_1 \wedge ... \wedge \widehat{v_i} \wedge ... v_k$
for $v_1, ..., v_k \in E^{\vee}$.
This can be regarded as a refined version of the usual Koszul complex given only by $\beta$.

Let $X:=Z (\ka, \kb ):= Z(\ka) \cap Z(\kb )$ be the zero substack of $Y$ defined by
the ideal sheaf generated by the sum of the
images of $E \to \cO$ and  $E^{\vee} \to \cO$ induced by $\ka$ and $\kb$, respectively.

The following lemma shows that the 2-periodic  Koszul complex $\{ \ka, \kb \}$ is strictly exact off $X$. 

\begin{Lemma} \label{lem: strict off}
Let $Z$ be a closed substack of $Y$.
\begin{enumerate}
\item A 2-periodic complex $G^{\bullet}$ of vector bundles on $Y$ is strictly exact off $Z$ if and only if
$G^{\bullet}$ is locally contractible, i.e., there exists an atlas $U$ of $Y-Z$ such that $F^{\bullet}:=G^{\bullet}|_U$ is contractible
by a homotopy map $ h^+\oplus h^- : F^+\oplus F^- \to F^-\oplus F^+$.
\item
The Koszul complex $\{ \ka, \kb \}$  is strictly exact off $X$.
\end{enumerate} 
\end{Lemma}

\begin{proof} 
1) ($\Leftarrow$)
Suppose that there exists an atlas $U$ of $Y-Z$ such that $F^{\bullet}$ is contractible
by a homotopy map $ h^+\oplus h^- : F^+\oplus F^- \to F^-\oplus F^+$. 
Of course, this implies that $G^{\bullet}$ is exact off $Z$.
It remains to verify that  $\Ker d^{\pm}|_{Y-Z}$ are locally free.
Note that the monomorphism $[d^+]: F^+/\Ker d^+ \to F^-$ induced from $d^+_F$ and
the induced map $[h^-]: F^- \to F^+/\Ker d^+$ by $h^{-}$ satisfy together that $$[h^-] \circ [d^+] = \mathrm{id}_{ F^+/\Ker d^+ }.$$
Hence $F^+/\Ker d^+$ is a direct summand of $F^-$, which shows that $F^+/\Ker d^+ \cong \mathrm{Im} d^+ \cong \Ker d^{-}$ is locally free.
Similarly using $h^+$, we can check that $F^-/\Ker d^- \cong \mathrm{Im} d^- \cong \Ker d^{+}$ is also locally free.

($\Rightarrow$) Suppose that $G^{\bullet}$ is strictly exact off $Z$. Then since $\mathrm{Im}d^{\pm} |_{Y-Z}$ are locally free,
the exact sequences $$0 \to \mathrm{Im}d^{\mp} |_{Y-Z} \to G^{\pm}|_{Y-Z} \to \mathrm{Im}d^{\pm} |_{Y-Z} \to 0$$ locally split. 
Hence there exists an atlas $U$ of $Y-Z$ such that $G^{\pm}|_{U} = \mathrm{Im} d^+|_{U} \oplus \mathrm{Im} d^- |_{U}$
and $d^{\pm}|_{U}$ are the projection with kernels $\mathrm{Im}d^{\mp}|_U$. Now it is clear that $G^{\bullet}|_U$ is contractible.

2) This follows from Proposition 2.3.3 of  \cite{MF} and (1). 
\end{proof}

As a special case of Lemma \ref{lem:duality} below,
we note here that there is an isomorphism
\begin{eqnarray} \label{duality} \{ \ka , 0 \} \cong \{ 0 , \ka \} \ot (\bigwedge ^{r} E^{\vee} ) [r]   \end{eqnarray} 
where $r$ is the rank of $E$. This will be used later.

\begin{Lemma}\label{lem:duality}
There is an isomorphism
\begin{eqnarray} \label{full duality} \{ \ka , \beta \} \cong \{ \beta , \ka \} \ot (\bigwedge ^{r} E^{\vee} ) [r]   \end{eqnarray} 
where $r$ is the rank of $E$. 
\end{Lemma}
\begin{proof}
For each non-negative integer $m$, there is  a non-degenerate pairing
\begin{align*} \Lambda ^m E  \ot \Lambda ^m (E^{\vee})  & \ra \cO_{Y}  \  , \\
 [v_1 \ot ... \ot v_m] \ot [v_1^* \ot ... \ot v_m^*] & \mapsto \sum_{\sigma \in \mathfrak{S}_m } \mathrm{sgn}(\sigma ) v_1^*(v_{\sigma (1)}) ... v_m^*(v_{\sigma (m)} )  \end{align*}
and hence  an identification $(\Lambda^m E)^\vee = \Lambda ^m (E^{\vee})$. If $\lan , \ran$ denotes the paring,
the isomorphism \eqref{full duality} is due to the duality of wedge product and interior product:
for $x \in \bigwedge ^l E^{\vee}, v \in \bigwedge^{l+1}  E$,
\begin{eqnarray} \label{eqn:wedge interior}  \lan \alpha \wedge x , v \ran = \lan x, \iota_{\alpha} (v) \ran  \end{eqnarray} 
which follows from the definition of the interior product (see for example Chapter 22 of \cite{tensor alg}). 
We provide the detail of how to get \eqref{full duality} from \eqref{eqn:wedge interior}.

 For each non-negative integer $k$ with $r-k\ge 0$, consider an isomorphism
\begin{eqnarray*}
\varphi_k: \bigwedge ^{k} E \ot  \bigwedge^{r} E^\vee  \ra   \bigwedge ^{r - k}  E^\vee,
\end{eqnarray*}
defined by 
$$\lan \varphi_k( u\ot z ) , v  \ran = \lan z,  v\wedge u \ran \ \forall  u\in  \bigwedge ^{k} E, z \in \bigwedge^{r} E^\vee,
v \in \bigwedge^{r - k} E .$$
Then, by \eqref{eqn:wedge interior}
we have $\varphi_{k-1} ( \iota_\ka (u)  \ot z ) =(-1)^{r-k} \ka \wedge \varphi_{k} ( u \ot z  ) $.  Hence we have 
a commuting diagram
\begin{eqnarray} \label{com diag 1}
\xymatrixcolsep{5pc} \xymatrix{
\bigwedge ^{k} E  \ot  \bigwedge^{r} E^\vee  \ar[r]^-{(-1)^{r} \cdot  \iota_\ka  \ot \mathrm{id}} \ar[d]^-{s(k) \varphi_k} & \bigwedge ^{k-1} E \ot  \bigwedge^{r} E^\vee    
\ar[d]^-{s(k-1) \varphi_{k-1} }  \\
\bigwedge ^{r - k}  E^\vee  \ar[r]_-{\ka \wedge}  &  \bigwedge ^{r - k + 1}  E^\vee  ,
}
\end{eqnarray}
where
\[ s(k) := \left\{ \begin{array}{ll} +1 & \text{ if } k \equiv 0 \text{ mod } 4 \\
                                                  +1 & \text{ if } k \equiv 3 \text{ mod } 4 \\
                                                  -1 & \text{ if } k \equiv  2 \text{ mod } 4 \\
                                                  -1 & \text{ if } k \equiv 1  \text{ mod } 4 .\end{array} \right. \]
By \eqref{eqn:wedge interior}, we have also $\varphi_{k+1}(  (\kb \wedge u) \ot z  ) = (-1)^{r-k-1} \iota_ \kb  (\varphi_k( u \ot z    ))$ and hence
a commuting diagram
\begin{eqnarray} \label{com diag 2}
\xymatrixcolsep{5pc} \xymatrix{
\bigwedge ^{k} E  \ot \bigwedge^{r} E^\vee  \ar[r]^-{(-1)^{r} \cdot  \kb \wedge \ot \mathrm{id}   } \ar[d]^-{ s(k) \varphi_k} & \bigwedge ^{k+1} E  \ot \bigwedge^{r} E^\vee \ar[d]^-{s(k+1) \varphi_{k+1} }  \\
\bigwedge ^{r - k}  E^\vee  \ar[r]_-{\iota_\kb}  &  \bigwedge ^{r - k+1}  E^\vee  .
}
\end{eqnarray}
Therefore, by the commuting diagrams \eqref{com diag 1} and \eqref{com diag 2},
$\oplus_{k=0}^r s(k)\varphi_k : \{ \beta , \alpha \} \ot (\bigwedge ^{r} E^{\vee} ) [r]  \to \{ \alpha, \beta \}$ is an isomorphism of the 2-periodic complexes.
\end{proof}

\subsection{Tautological Koszul complex}\label{taut}
Let $M$ be a DM stack and let $F$ be a vector bundle on $M$. 
Consider $\sigma \in H^0(M, F^\vee)$. It yields a  function on the total space $F$:
\[ w_{\sigma}: F  \ra \mathbb{A}^1. \]
Denote by $p$ the projection $F \ra M$. Then 
there is a tautological section $t_F \in H^0(F , p^*F)$ defined by the diagonal morphism $F  \ra F \times_M F$.
Note that $\lan p^* \sigma , t_F \ran = w_\sigma$, where we regard 
the left hand side as the composition $F \xrightarrow{p^*\sigma \ot t_F} p^*F^{\vee} \ot p^* F \xrightarrow{pairing} \mathbb{A}^1$.
Consider the Koszul complex
$$\{ p^* \sigma, t_F \}$$ on the zero locus $Z(w_{\sigma}) := w_{\sigma}^{-1}(0) $ of $w_{\sigma}$.

Starting from the setup in \S \ref{Kos}, we can build the tautological one by
 letting $M :=Z(\kb)$, $F :=E|_{Z(\beta)}$, $\sigma := \ka |_{Z(\beta)}$. Note that $Z(\sigma) = X$.

  Let $C_{V\cap Z(\beta)} V$ denote the normal cone to $V\cap Z(\beta)$ in an integral closed substack $V$ of $Y$.
  If we let $\mathscr{I}$ be the ideal sheaf of the substack $Z(\beta)$  in $Y$ 
and $\mathscr{E}$ denotes the sheaf associated to the vector bundle $E$, 
we have the surjection 
$$ \Sym ^{\bullet} \mathscr{E}^\vee  \ot \cO_{Z(\beta)}  := \oplus _{k=0}^{\infty} \Sym^k \mathscr{E}^\vee  \ot \cO_{Z(\beta)} \to \oplus_{k=0}^{\infty} \mathscr{I}^k \ot \cO_{Z(\beta)}$$
 induced from $\beta$. Hence $C_{Z(\beta)} Y$ can be regarded as  a closed substack of $F=E|_{Z(\beta)}$.
 This in turn implies that $C_{V\cap Z(\beta)} V$ is  a closed substack of $F|_{V\cap Z(\beta)}$ since $C_{V\cap Z(\beta)} V$ is naturally a closed 
 substack of $C_{Z(\beta)} Y \ti _{Z(\beta )} (V\cap Z(\beta ))$.

 \begin{Lemma}\label{taut:lemma}
 The following statements hold.
\begin{enumerate}

\item  The substack $C_{V\cap Z(\beta)} V$ of $F|_{V\cap Z(\beta )}$ is contained in $Z(w_\sigma )$.

\item 
 \[ \ch ^Y_X (\{ \ka , \kb \}  ) \cap [V] =  \ch ^{Z(w_\sigma)} _{Z(\sigma)} (\{ p^* \sigma, t_{F} \} )\cap   [ C_{V\cap Z(\beta)} V] . \]
 \end{enumerate}
 
\end{Lemma}

 \begin{proof}  
 We may assume that $V=Y$ by the closed immersion $C_{V\cap Z(\beta)} V \subset C_{Z(\beta)} Y \ti _{Z(\beta )} (V\cap Z(\beta ))$ 
 and the compatible property with proper pushforward of the localized Chern characters (for (1) and (2), respectively). 
 Consider 
 the graph $\Gamma _{\beta}$ of a section $\lambda \beta$ of $E \times ( \mathbb{A}^1- \{ 0 \})$ on $Y\times ( \mathbb{A}^1- \{ 0 \})$ and its closure $\overline{\Gamma}_{\beta}$
 in $E \times (\PP ^1- \{ 0\}) $, whose fiber at $\infty \in \PP^1 - \{ 0\} $ is  $C_{Z(\beta )} Y $. If $\tilde{p}$ denotes the projection of $E \times \PP ^1 - \{ 0\}$ to $Y$, then
 the vector bundle $(\tilde{p}^*E) |_{\overline{\Gamma}_{\beta}}$ with the diagonal section $\tilde{t}$ and $\tilde{p}^*\ka$ realizes the deformation of $E$ with $\kb, \ka$ to 
 $(p^*E)|_{ C_{Z(\beta )}Y}$ with $t_F, p^*\sigma$.   
 On $\overline{\Gamma}_\kb$, $\lan \tilde{p}^*\ka , \tilde{t} \ran =0$ because it becomes $\lambda \lan \ka, \kb \ran =0 $ at any point $\lambda \in \mathbb{A}^1- \{ 0\}$.
 In particular, $C_{Z(\beta)} Y$ is a substack of $Z(w_\sigma )$, which proves (1).
 Now, the class $\ch ^{\overline{\Gamma}_\kb}_{X \times (\mathbb{P}^1- \{ 0\}) } (\{  \tilde{p}^*\ka , \tilde{t} \}  ) \cap [\overline{\Gamma}_\kb] $ 
 pulled back to $\ch ^Y_X (\{ \ka , \kb \}  ) \cap [Y]$ at any point $\lambda \in \mathbb{A}^1-\{0\}$, and pulled back to $\ch ^{C_{Z(\beta)} Y} _{Z(\sigma)} (\{ p^* \sigma, t_{F} \} )\cap   [ C_{ Z(\beta)} Y] 
 \stackrel{(\star)}{=} \ch ^{Z(w_\sigma)} _{Z(\sigma)} (\{ p^* \sigma, t_{F} \} )\cap   [ C_{Z(\beta)} Y]$ at $\infty \in \PP^1$.
Here the equality $(\star) $ follows  from the compatibility with proper pushforward. 
Both pullbacks are equal by the compatibility with refined Gysin homomorphism, i.e., Definition 17.1 $(\mathrm{C}_3)$ 
of \cite{Ful}. This proves (2).
\end{proof}

 Let $j: X:=Z(\ka, \kb) \hookrightarrow Z(\beta)$ be the inclusion.
Then the following corollary shows that $\ch ^{Y}_X \{ \ka, \kb\} \cap [Y]$ after pushforward by $j$ is
nothing but the localized top Chern class  of $E$ up to a Todd class operation.

 \begin{Cor}\label{Gysin}
 We have
\begin{equation} \label{push to amb} j_*(\ch^{Y}_X (\{ \ka, \kb \})  \cap [V])
= (\td E|_{Z(\beta)} )^{-1} \cdot  0^!_{E|_{Z(\beta)}} ([C _{V\cap Z(\beta)} V]) . \end{equation}
Furthermore if $\beta$ is regular so that the natural homomorphism from $E^{\vee}|_{Z(\beta )}$ to the conormal sheaf
of $Z(\beta)$ in $Y$ is an isomorphism (see \cite[\S A.5]{Ful} for the definition of a regular section), 
then
\begin{equation} \label{push to amb regular} j_*\ch^{Y}_X (\{ \ka, \kb \}) 
= (\td  E|_{Z(\beta)} )^{-1}  \cdot  i_{Z(\beta)}^! \end{equation}  where 
$i_{Z(\beta)}$ denotes the regular immersion of $Z(\beta)$ in $Y$.
\end{Cor} 
 \begin{proof} 
  To prove \eqref{push to amb}, we may assume $V=Y$ using the bivariant properties of both side.
 By applying Lemma \ref{taut:lemma}, Proposition 2.3 (i) of \cite{PV:A}, and the homotopy deforming $p^*\sigma$ to $0$, we obtain
 \begin{align*}
 j_*(\ch^{Y}_X (\{ \ka, \kb \})  \cap [Y]) & = j_*( \ch ^{Z(w_\sigma)} _{Z(\sigma)} (\{ p^* \sigma, t_{F} \} )\cap   [ C_{ Z(\beta)} Y] ) \\
 & =  \ch ^{Z(w_\sigma)} _{Z(\kb)} (\{ p^* \sigma, t_{F} \} )\cap   [ C_{ Z(\beta)} Y] \\
 & =  \ch ^{Z(w_\sigma)} _{Z(\kb)} (\{ 0, t_{F} \} )\cap   [ C_{ Z(\beta)} Y].
 \end{align*}
 Note that $\{ 0, t_{F} \}$ is the 2-periodic complex corresponding to the Koszul-Thom complex.
 Now Proposition 2.2,  Proposition 2.3 (vi) of \cite{PV:A}, and the compatibility with proper pushforward  complete the proof of \eqref{push to amb}. 
 The equation \eqref{push to amb regular} is immediate from \eqref{push to amb} since $$0^!_{E|_{Z(\beta)}} ([C_{V\cap Z(\beta)} V]) = i^!_{Z(\beta )} ([V])$$ for the regular $\beta$.
  \end{proof}
 
Let $V$ be an integral substack of $Y$. 
For the pair $(E|_{V}, \beta |_V)$, there is the notion of the localized top Chern class  of $E|_V$ with respect to $\beta|_V$; see \cite[\S 14.1]{Ful}.
It is by definition $0^!_{E|_{Z(\beta)}} ([C _{V\cap Z(\beta)} V]) \in A_*(Z(\beta ))_{\QQ}$.
This eventually yields a bivariant class in $A^*(Z(\beta ) \to Y)_{\QQ}$, which we call 
the {\em localized top Chern class operation} of $E$ with respect to $\beta$. 

\begin{Cor} 
The class $\td (E|_{Z(\beta)}) \cdot \ch^{Y}_{Z(\beta)} (\{ 0, \kb \})$ agrees with the localized top Chern class operation of $E$ with respect to $\beta$.
\end{Cor}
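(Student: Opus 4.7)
My plan is to derive this corollary directly from Corollary \ref{Gysin} by specializing the pair $(\ka, \kb)$ of \S \ref{Kos} to $\ka = 0$. The compatibility hypothesis $\langle \ka, \kb \rangle = 0$ then holds tautologically, the refined Koszul complex $\{0, \kb\}$ becomes the folded ordinary Koszul complex of $\kb$ alone (both differentials being $\iota_\kb$), and the exactness-off-$X$ condition from \S \ref{Kos} is automatic since the Koszul complex of $\kb$ is exact off $Z(\beta)$. Crucially, the zero locus $X = Z(0, \kb)$ collapses to $Z(\beta)$, so the inclusion $j \colon X \hookrightarrow Z(\beta)$ appearing in Corollary \ref{Gysin} becomes the identity map and its push-forward is trivial.

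Applied to each integral substack $V \subset Y$ with $V \cap Z(\beta) \ne \emptyset$, Corollary \ref{Gysin} then reads
\[
\ch^{Y}_{Z(\beta)}(\{0,\kb\}) \cap [V] \;=\; (\td E|_{Z(\beta)})^{-1} \cdot 0^!_{E|_{Z(\beta)}}\bigl([C_{V\cap Z(\beta)} V]\bigr).
\]
Multiplying both sides by the invertible class $\td(E|_{Z(\beta)}) \in A^*(Z(\beta))_{\QQ}$, the left-hand side becomes exactly the class in the statement evaluated on $[V]$, while the right-hand side is, by the definition recalled just before the corollary, the value on $[V]$ of the localized top Chern class operation of $E$ with respect to $\beta$. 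The complementary case $V \cap Z(\beta) = \emptyset$ is automatic: both sides vanish on $[V]$, the right one since the defining normal cone is empty and the left one since $\{0,\kb\}|_V$ is globally exact.

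To upgrade this equality of values on integral cycles to an equality of bivariant classes in $A(Z(\beta) \to Y)_{\QQ}$, I would invoke the reduction formula $\ch^{Y}_X(E_\bullet) \cap [V] = j'_*\bigl(\ch^{V}_{V \times_Y X}(j^*E_\bullet) \cap [V]\bigr)$ recorded at the beginning of \S 2.1, together with the analogous linearity over cycles of the localized top Chern class operation as in \cite[\S 14.1]{Ful}. I do not anticipate a real obstacle: the argument is essentially a one-line specialization $\ka = 0$ of the preceding Corollary \ref{Gysin}, and the only verifications required — that the hypotheses of \S \ref{Kos} hold for $(0, \kb)$ and that the inclusion $j$ degenerates to the identity — are immediate.
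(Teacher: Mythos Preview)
Your proposal is correct and matches the paper's intended argument: the corollary is stated without proof, immediately after Corollary \ref{Gysin} and the definition of the localized top Chern class operation, precisely because it is the specialization $\ka = 0$ that you describe. Your checks that $X = Z(0,\kb) = Z(\beta)$, that $j$ becomes the identity, and that the hypotheses of \S \ref{Kos} are met are exactly the points one needs.
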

 
 \begin{proof}
 This is immediate from \eqref{push to amb} for $\ka =0$ and the definition of the localized top Chern class. 
 \end{proof}

 \subsection{Splitting Principle}
 
 The splitting principle shows that essentially localized Chern character operation for a 2-periodic Koszul complex is a composition of 
 localized top Chern class operations, one given by a section and the other given by a cosection, up to Todd correction.

 \subsubsection{} \label{deformation: spl}
 Consider the situation of \S \ref{Kos}.
 From now, for a section $\alpha \in H^0(Y, E^\vee)$, we write the associated cosection schematically as
$\alpha: E \ra \cO_Y$.
{\em Suppose that the cosection $\ka$ is factored through as a cosection $\ka _Q$ of 
a quotient vector bundle $Q$ of $E$.} Let $f: K  \hookrightarrow E$ be the kernel of 
the quotient map $q: E\ra Q$. {\em Furthermore suppose that $q\circ \beta = 0$.}
This means that $\beta$ induces a section $\beta _K$ of $K$.

We consider the vector bundles on $Y\times \mathbb{A}^1$  by the pullback of $E$, $K$ under the projection map. They will be 
denoted by same symbols $E$, $K$ abusing notation. If $\mu$ denotes the standard coordinate of $\mathbb{A}^1$,
we have a commuting diagram of homomorphisms of vector bundles on $Y\times \mathbb{A}^1$:
\[ \xymatrix{                     &                     & & \cO_{Y\times \mathbb{A}^1} \ar[d]_{(\mu\kb, (1-\mu ) \beta _K)}     \ar@{-->}[rrd]^{\kb_P} &  &        &  \\
       0 \ar[r] &    K \ar[rr]_{(f, \mu\mathrm{id}_K)}  & & E \oplus K \ar[rr]  \ar[d]^{(\ka , 0)}                             & &  P     \ar@{-->}[lld]^{\ka _P}    \ar[r]  &  0   \\
                                &                             & &                                        \cO_{Y \times \mathbb{A}^1}                                    &    &             &        }   \] 
 and the induced section $\beta _P$ and cosection $\alpha _P$ of $P$. Here $P$ is defined to be 
 the cokernel of  $(f, \mu\mathrm{id}_K)$. 
 
 Note that $P$ restricted to $\mu =0$ is canonically isomorphic to
 $ Q \oplus K $; and $P$ restricted to any nonzero $\mu$ is canonically isomorphic to $E$.
 Note that $Z(\ka _P, \kb _P )$ coincides with  $X \times \mathbb{A}^1$ set-theoretically. 
 Hence $\{ \ka _P, \kb _P\}$ is strictly exact off $X \times \mathbb{A}^1$. Using the bivariance of $\ch^{Y\ti \mathbb{A}^1}_{X\ti \mathbb{A}^1}\{ \ka _P, \kb _P\}$ 
 with refined Gysin maps,
 we have that
 
\begin{equation}\label{eqn: spl dec} \ch ^Y_X \{ \ka , \kb\} =  \ch ^Y_X (\{\ka _Q, 0\} \ot \{ 0 , \beta _K\} ) . \end{equation}

 \medskip
 
 The Chern characters of $\{\ka _Q, 0\}$ and $\{ 0 , \beta _K\}$ can be expressed as refined Gysin maps as in Lemma \ref{SP1} below.
 For the precise statement we first introduce some notation.

 Let $\cE$ be a vector bundle on a DM stack $\bfB$ and let $\bfA$ be the zero locus of a section of $\cE$.
 We denote by $\spp _{C_{\bfA}\bfB}$  the specialization homomorphism $A_* (\bfB)_{\QQ}  \ra A_* (C_{\bfA}\bfB)_{\QQ}$
 followed by the pushforward to $A_*(\cE|_{\bfA})_{\QQ}$ under the inclusion $C_{\bfA}\bfB \subset \cE|_{\bfA}$.

 \begin{Lemma}\label{SP1} The following equality holds:
 \begin{align*}    
  \ch ^Y_{X} (\{ \ka, \kb \} ) \cap [V]  = 
 \td (E|_{X})^{-1} (-1)^{\rank Q} 
 0^{!}_{Q^{\vee}|_{Z(\beta _K, \ka _Q)}} \\
 ( \spp _{ C_{Z(\beta _K, \ka _Q)}  Z(\beta _K) } (  0^{!}_{K|_{Z(\kb _K)}} 
 [ C_{V\cap Z(\kb _K ) } V ] ) ) . \end{align*}
\end{Lemma}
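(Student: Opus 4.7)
The plan is to deform $\{\ka,\kb\}$ on $E$ to a split complex on $Q\oplus K$ at $\mu=0$ using the $Y\times\mathbb{A}^1$-family $(P,\ka_P,\kb_P)$ constructed in the setup, decompose the split complex as a tensor product of two Koszul complexes, and evaluate each factor via Corollary \ref{Gysin} and the duality (\ref{duality}). Concretely, one considers
\[ \ch^{Y\times\mathbb{A}^1}_{X'\times\mathbb{A}^1}(\{\ka_P,\kb_P\})\cap[V\times\mathbb{A}^1]\in A_*(X'\times\mathbb{A}^1)_{\QQ}, \]
which is well-defined because $Z(\ka_P,\kb_P)\subset X'\times\mathbb{A}^1$ by hypothesis; by $\mathbb{A}^1$-homotopy invariance its Gysin restrictions to $\mu=1$ and $\mu=0$ agree in $A_*(X')_{\QQ}$. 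Under the identifications $(P,\kb_P,\ka_P)|_{\mu=1}\cong(E,\kb,\ka)$ and $(P,\kb_P,\ka_P)|_{\mu=0}=(Q\oplus K,(0,\kb_K),(\ka_Q,0))$, this reduces the lemma to evaluating $\ch^Y_{X'}(\{(\ka_Q,0),(0,\kb_K)\}_{Q\oplus K})\cap[V]$.

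Since $\bigwedge^\bullet(Q\oplus K)^\vee=\bigwedge^\bullet Q^\vee\otimes\bigwedge^\bullet K^\vee$ and the Koszul differentials are additive, there is a tensor factorization $\{(\ka_Q,0),(0,\kb_K)\}_{Q\oplus K}=\{\ka_Q,0\}_Q\otimes\{0,\kb_K\}_K$. By multiplicativity of the localized Chern character over tensor products (\cite[Prop.~2.3]{PV:A}), the Chern character at $\mu=0$ applied to $[V]$ equals
\[ \ch^Y_{Z(\ka_Q)}(\{\ka_Q,0\}_Q)\cap\Bigl(\ch^Y_{Z(\kb_K)}(\{0,\kb_K\}_K)\cap[V]\Bigr). \]
Corollary \ref{Gysin} evaluates the inner factor (where the cosection vanishes) as $\td(K|_{Z(\kb_K)})^{-1}\,0^!_{K|_{Z(\kb_K)}}[C_{V\cap Z(\kb_K)}V]$. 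For the outer factor, the duality (\ref{duality}) gives $\{\ka_Q,0\}_Q=\{0,\ka_Q\}_{Q^\vee}\otimes\bigwedge^{\rank Q}Q^\vee[\rank Q]$; the shift contributes $(-1)^{\rank Q}$ and the line-bundle twist multiplies the Chern character by $\ch(\det Q^\vee)$, while Corollary \ref{Gysin} applied to $\{0,\ka_Q\}_{Q^\vee}$ identifies the remaining operation, acting on classes supported on $Z(\kb_K)$, with $\td(Q^\vee)^{-1}\cdot 0^!_{Q^\vee|_{Z(\kb_K,\ka_Q)}}\circ\spp_{C_{Z(\kb_K,\ka_Q)}Z(\kb_K)}$.

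Combining the scalar factors and using $\td(Q^\vee)=e^{-c_1(Q)}\td(Q)$ (extended multiplicatively from line bundles) together with $\td(E)=\td(K)\td(Q)$ from $0\to K\to E\to Q\to 0$, one obtains
\[ \td(K)^{-1}\ch(\det Q^\vee)\td(Q^\vee)^{-1}=\td(K)^{-1}\td(Q)^{-1}=\td(E)^{-1}. \]
After pushforward by $i:Z(\kb_K,\ka_Q)\hookrightarrow X'$, this yields precisely the formula in the lemma.

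The main obstacle is the first step, namely justifying the $\mathbb{A}^1$-deformation: one needs the hypothesis $Z(\ka_P,\kb_P)\subset X'\times\mathbb{A}^1$ precisely to assemble a global localized Chern character whose fiber restrictions recover both sides. The remaining manipulations are bookkeeping, relying on the multiplicativity of \cite{PV:A} and the standard identity that the localized top Chern class operation of a vector bundle with respect to a section equals $0^!\circ\spp$.
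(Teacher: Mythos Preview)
Your argument is correct and follows essentially the same route as the paper's proof: both use the $\mathbb{A}^1$-family $(P,\ka_P,\kb_P)$ and homotopy invariance to reduce to the split complex $\{\ka_Q,0\}\otimes\{0,\kb_K\}$, then peel off the two factors and collect the Todd contributions via the duality \eqref{duality} and the identity $\td\cE=\td\cE^\vee\cdot(\ch\det\cE^\vee)^{-1}$. The only organizational difference is that you invoke the tensor-product multiplicativity of \cite[Prop.~2.3]{PV:A} first and then apply Corollary~\ref{Gysin} to each factor, whereas the paper carries out the deformation to the normal cone $C_{Z(\kb_K)}Y\subset |K|_{Z(\kb_K)}$ explicitly (as in Lemma~\ref{taut:lemma}) and then applies \cite[Prop.~2.3~(vi)]{PV:A} directly; since Corollary~\ref{Gysin} is itself proved by exactly that deformation-plus-2.3(vi) argument, the two presentations unwind to the same computation.
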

 
 \begin{proof} Let $p$ be the projection $|K| \ra Y$. Note that
  \begin{align*} & \ch ^Y_{X} (\{ \ka, \kb \} ) [V]  
                 \\  = &  \ch ^Y_{X} ( \{\ka _Q , 0 \}  \ot \{ 0, \kb _K\}) [V]  \nonumber \\
                                   = & \ch ^{ K |_{Z(\beta_K)}  }_{X}  ( \{ p^*\ka _Q , 0\} \ot \{ 0, t_K \}) [ C_{V\cap Z(\kb _K ) } V ]  \nonumber \\    
                                   = &  (\td K|_{X})^{-1} \ch ^{Z(\beta _K)}_{X}  ( \{ \ka _Q , 0\} ) \cdot  0^{!}_{K|_{Z(\kb _K)}}[ C_{V\cap Z(\kb _K ) } V ] .
                                  \end{align*}
The first equality is from \eqref{eqn: spl dec}. 
The second equality is the deformation to the normal cone.
The last equality is 
Proposition 2.3 (vi) of \cite{PV:A}.
Finally using \eqref{duality}, Corollary \ref{Gysin}, and the fact that $\td \cE = \td \cE^\vee \cdot (\ch (\Lambda ^{\rank \cE} \cE^\vee ))^{-1}$ for a vector bundle $\cE$, we conclude the proof.
\end{proof}

\begin{Cor}\label{Case:regular}
If $\ka$, $\kb$ are regular sections,
then  \[ \ch ^Y_{X} (\{ \ka, \kb \} )   =(-1)^{\rank Q} \td (E|_{X})^{-1}  i_{Z(\ka _Q, \kb _K)}^!   i_{Z(\kb _K)}^!  , \]
where $i_{Z(\ka _Q, \kb_K)}, i_{Z(\kb _K)}$ are regular immersions of $Z(\ka _Q, \kb _K), Z(\kb _K)$ into $Z(\kb_K)$, $Y$ respectively.
\end{Cor}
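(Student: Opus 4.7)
The plan is to reduce this to Lemma \ref{SP1} and then replace each composition of a specialization with a zero-section Gysin pullback by a refined Gysin map for a regular embedding. First, I would interpret the hypothesis: since $\alpha$ factors through the quotient $Q$ and $\lan \alpha, \beta \ran = 0$, regularity of $\alpha, \beta$ should be read as saying that $\alpha_Q^\vee \in H^0(Y, Q^\vee)$ and $\beta_K = \beta \in H^0(Y, K)$ are regular sections. Consequently $Z(\beta_K) \hookrightarrow Y$ is a regular immersion of codimension $\rank K$, $X' = Z(\alpha_Q, \beta_K) \hookrightarrow Y$ is a regular immersion of codimension $\rank E$, and the restriction $\alpha_Q^\vee|_{Z(\beta_K)}$ is a regular section of $Q^\vee|_{Z(\beta_K)}$ cutting out $X'$ regularly inside $Z(\beta_K)$ with normal bundle $Q^\vee|_{X'}$.

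With this in hand, I would apply Lemma \ref{SP1} to an arbitrary integral substack $V \subset Y$ with $V \cap Z(\beta) \ne \emptyset$ and simplify the right-hand side using the classical identification (Fulton, Theorem 6.2 combined with Proposition 6.1(a)): for any regular section $s$ of a vector bundle $F$ on a DM stack $W$ with regular zero locus $Z$ (and hence normal bundle $F|_Z$) and any integral substack $V' \subset W$, one has
\[ 0^!_{F|_Z} \, \spp_{C_{V' \cap Z} V'} ([V']) \;=\; 0^!_{F|_{V' \cap Z}} [C_{V' \cap Z} V'] \;=\; i_Z^! [V']. \]
I apply this identity twice. First, with $(W, F, s, Z) = (Y, K, \beta_K, Z(\beta_K))$, it converts $0^!_{K|_{Z(\beta_K)}}[C_{V \cap Z(\beta_K)} V]$ into $i_{Z(\beta_K)}^![V]$. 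Second, with $(W, F, s, Z) = (Z(\beta_K), Q^\vee|_{Z(\beta_K)}, \alpha_Q^\vee|_{Z(\beta_K)}, X')$, it converts the outer $0^!_{Q^\vee|_{X'}} \spp_{C_{X'} Z(\beta_K)}$ into $i_{Z(\alpha_Q, \beta_K)}^!$, the Gysin map for the regular embedding $X' \hookrightarrow Z(\beta_K)$.

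After these replacements, the outer pushforward $i_*$ of Lemma \ref{SP1} is the identity on $A_*(X')$, giving
\[ \ch^Y_{X'}(\{\alpha, \beta\}) \cap [V] \;=\; (-1)^{\rank Q}\, \td(E|_{X'})^{-1}\, i_{Z(\alpha_Q, \beta_K)}^!\, i_{Z(\beta_K)}^!\, [V] \]
for every integral $V \subset Y$ (and trivially for $V$ disjoint from $Z(\beta)$, where both sides vanish). Since both sides define bivariant operations in $A(X' \to Y)_{\QQ}$, this pointwise equality upgrades to the claimed equality of operations. The main technical point I anticipate having to justify carefully is that $\alpha_Q^\vee|_{Z(\beta_K)}$ really is a regular section on $Z(\beta_K)$, rather than merely having the correct support; this follows from the codimension count $\codim(X' \subset Y) = \rank E = \rank K + \rank Q$ together with $Z(\beta_K)$ being regularly embedded of codimension $\rank K$, by a standard Cohen--Macaulay-type argument on the local defining regular sequences.
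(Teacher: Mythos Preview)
Your proposal is correct and follows the same approach as the paper: the paper's proof is the single sentence ``This immediately follows from the definition of refined Gysin homomorphisms; see \S 6.2 of Fulton,'' and what you have written is precisely the unpacking of that sentence, identifying each composite $0^! \circ \spp$ in Lemma~\ref{SP1} with the corresponding refined Gysin map. Your extra care about the regularity of $\alpha_Q^\vee|_{Z(\beta_K)}$ is not discussed in the paper but is a reasonable point to make explicit.
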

\begin{proof}
This immediately follows from Lemma \ref{SP1} and the definition of refined Gysin homomorphisms; see \S 6.2 of Fulton \cite{Ful}.
\end{proof}
 \subsubsection{} Consider the situation of \S \ref{taut}.
 
 Let $W$ be an integral closed substack of the total space of $F$ which is factored through $p^{-1}(Z(\sigma))$,
  i.e., $W \ra p^{-1}(Z(\sigma )) \subset  F$. Note that this means that
  $p^*\sigma |_W = 0$. Hence by Proposition 2.3 (vi) of \cite{PV:A}
  \begin{eqnarray} \label{Trivial_alpha} \ch ^{Z(w_{\sigma})}_{Z(\sigma)} (\{ p^*\sigma, t_F\}) \cap [W] = \td(F|_{Z(\sigma)} )^{-1}  \cdot 0^!_{F|_{Z(\sigma )} }([W]) . \end{eqnarray}

 Let $W$ be an irreducible cycle of $F$ which is not factored through $p^{-1}(Z(\sigma))$. 
 Following \cite{KiemLi, CLL}, we consider the blow-up $M'$ of $M$ along $Z(\sigma)$.
  Let $F'$ be the pullback of $F$ to $M'$, and let $D$ be the exceptional divisor.
  On $M'$ we obtain short exact sequences and a chain map between them 
   \[ \xymatrix{   0 \ar[r] &        K    \ar[r] \ar[d] &      F'   \ar[r] \ar[d]_{\sigma '}  & \cO_{M'} (-D)    \ar[d]_{s_D} \ar[r] & 0 \\
                             0 \ar[r] &    0      \ar[r]          &    \cO_{M'}  \ar@{=}[r] & \cO_{M'}  \ar[r] & 0 ,   } \]
                             where $K$ is defined to be the kernel, $\sigma '$ is the pullback of $\sigma$, and $s_D$ is the inclusion map of the ideal sheaf $\cO_{M'}(-D)$ of $\cO_{M'}$.
This shows that locally $(F', w_{\sigma '})$ is isomorphic to $(K \oplus \cO_{M'} (-D),  w_{s_D})$. 
Therefore we note that the proper transform $W'$ of $W$ is contained in $K$ since it is the case for general points of $W'$. 
 By the compatibility with proper pushforward $Z(w_{\sigma '}) \ra Z(w_\sigma )$, we have
 $$\ch ^{Z(w_{\sigma})}_{Z(\sigma)} (\{p^*\sigma, t_F\} ) \cap [W]  = b_* \ch _{Z(\sigma ' )} ^{ K   } (\{ (p')^*\sigma ', t_{F'}\} )  \cap [W'] $$ 
 where $b: Z(\sigma ' ) = D \ra Z(\sigma )$ and $p' : F' \ra M'$ are the projections.

By Corollary \ref{Case:regular} we conclude that
\begin{align}\label{SP3}   \ch ^{Z(w_{\sigma})}_{Z(\sigma)}  (\{p^*\sigma, t_F\} ) \cap [W]  & 
	    =  - (\td E|_{Z } )^{-1} \cdot b_* (i_{D}^! \cdot i_{M'}^! ([W'])) 
\end{align}
where $i_D: D \ra M' $ is the inclusion and  $i_{M'} : M' \ra K$ is the inclusion as the zero section.

\subsection{Cosection Localization}

Consider the setup in \S \ref{taut}.
Kiem and Li \cite{KiemLi} defined the cosection localized Gysin map:
$0^!_{F, \sigma } : A_*(Z(w_\sigma ))_{\QQ} \ra A_*(Z(\sigma ))_{\QQ}$, for an algebro-geometric understanding of
a work of Lee and Parker \cite{LeeParker}.

\bigskip

\noindent{\em Proof of Theorem \ref{Thm1}.}
The equations  \eqref{Trivial_alpha} and  \eqref{SP3}  exactly match with basic construction 
determining the cosection localized Gysin map; see \S 2 of \cite{KiemLi}. \hfill $\Box$

\bigskip

Let $d: A\ra F$ be a complex of vector bundles on a DM stack $M$. Suppose that its dual gives rise to a 
perfect obstruction theory relative to a pure-dimensional stack $\fM$. 
Supposed that $M\ra \fM$ is representable.
Consider a cosection $\sigma$ of $F$ such that $\sigma \circ d  = 0$.
Let $C$ be the cone in $F$ associated to the relative intrinsic normal cone of $M$ over $\fM$. 
Assume that the cosection has a lift as a cosection of {\em absolute} obstruction sheaf. 
Then
$C$ is as a cycle, i.e., set-theoretically, supported in $Z(w_{\sigma} )$
by Kiem - Li  \cite[Proposition 4.3]{KiemLi}.

As the immediate consequence of Theorem
\ref{Thm1} we obtain the following corollary.

\begin{Cor}\label{CoVir:Ch} The following equality holds:
\[ [M]^{\vir}_{\sigma} := 0^!_{F, \sigma } [C]  = \td  F|_{Z(\sigma)}  \cdot \ch^{Z(w_\sigma )}_{Z(\sigma )} ( \{ p^* \sigma , t_F \})\cap [C] . \]
\end{Cor}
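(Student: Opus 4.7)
The plan is to recognize that this corollary is essentially a direct application of Theorem \ref{Thm1} evaluated on the cycle class $[C]$. The work is already done; what remains is to check that the objects on both sides make sense and then cap the operator identity of the theorem with $[C]$.

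First, I would verify that $[C]$ defines a class in $A_*(Z(w_\sigma))_\QQ$ so that both operators in Theorem \ref{Thm1} can be applied to it. The cone $C \subset F$ comes from the relative intrinsic normal cone of $M \to \fM$ via the perfect obstruction theory $d \colon A \to F$. Under the assumption that the cosection $\sigma$ lifts to a cosection of the absolute obstruction sheaf, the cited result of Kiem--Li (Proposition 4.3 of \cite{KiemLi}) guarantees that $C$ is set-theoretically supported in $Z(w_\sigma) \subset F$, so $[C]$ is naturally a cycle class on $Z(w_\sigma)$.

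Next, by the definition of the cosection localized virtual class, $[M]^{\vir}_\sigma := 0^!_{F,\sigma}[C] \in A_*(Z(\sigma))_\QQ$. Theorem \ref{Thm1} supplies the equality of bivariant operations
\[
0^!_{F,\sigma} \;=\; \td F|_{Z(\sigma)} \cdot \ch^{Z(w_\sigma)}_{Z(\sigma)}\bigl(\{p^*\sigma, t_F\}\bigr)
\]
on $A_*(Z(w_\sigma))_\QQ$. Applying both sides to $[C]$ yields the claimed identity.

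There is no substantive obstacle here: the corollary is a tautological repackaging of Theorem \ref{Thm1} at the level of cycles. The only nontrivial ingredient beyond the theorem itself is the Kiem--Li support statement needed to place $[C]$ on $Z(w_\sigma)$, and this is invoked verbatim from the paragraph preceding the corollary.
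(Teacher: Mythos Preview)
Your proposal is correct and matches the paper's approach exactly: the paper presents this corollary as ``the immediate consequence of Theorem \ref{Thm1}'' with no further argument, and the only additional ingredient is the Kiem--Li support statement placing $[C]$ on $Z(w_\sigma)$, which is precisely what the paragraph preceding the corollary supplies.
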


By Corollary \ref{CoVir:Ch} and Lemma \ref{taut:lemma} (2) we obtain this.
\begin{Cor}\label{CLL:Prop} {\em (Chang, Li, and Li \cite[Proposition 5.10]{CLL})}
Consider the set-up in \S \ref{Kos}. Suppose that $Y$ is smooth.
Let $F=E|_{Z(\beta)}$ and $\sigma = \ka |_{Z(\beta)}$.
Then $$ 0^!_{F, \sigma } [C_{Z(\beta )} Y] = \td E|_{Z(\ka, \kb)} \cdot  \ch ^Y_{Z(\ka, \kb)} (\{ \ka, \kb \}) \cap  [Y] . $$ 
\end{Cor}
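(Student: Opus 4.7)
The plan is to prove the corollary as a direct combination of Theorem \ref{Thm1} and Lemma \ref{taut:lemma}, with the roles of cause and effect arranged so that the cosection-localized Gysin map acting on the normal cone is identified, via the tautological Koszul complex on $|F|$, with the localized Chern character of the original Koszul complex on $Y$ acting on $[Y]$.

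First I would apply Theorem \ref{Thm1} directly to the cycle $[C_{Z(\beta)}Y] \in A_*(Z(w_\sigma))_{\QQ}$. This is legitimate because $Y$ is smooth, so $C_{Z(\beta)}Y$ embeds naturally as a closed substack of $E|_{Z(\beta)} = F$, and the relation $\sigma\circ (d\beta)|_{Z(\beta)}=0$ (which follows from $\langle\alpha,\beta\rangle=0$) forces $C_{Z(\beta)}Y$ to be supported in $Z(w_\sigma)$ by the argument recalled just before Corollary \ref{CoVir:Ch}. Theorem \ref{Thm1} then gives
\[
0^!_{F,\sigma}[C_{Z(\beta)}Y] \;=\; \td F|_{Z(\sigma)}\cdot \ch^{Z(w_\sigma)}_{Z(\sigma)}(\{p^*\sigma,t_F\})\cap [C_{Z(\beta)}Y].
\]

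Next I would invoke Lemma \ref{taut:lemma} with $V=Y$ (and, if $Y$ is not irreducible, summed over its connected components, which is harmless by linearity) to rewrite the right-hand factor as
\[
\ch^{Z(w_\sigma)}_{Z(\sigma)}(\{p^*\sigma,t_F\})\cap [C_{Y\cap Z(\beta)}Y] \;=\; \ch^Y_{Z(\alpha,\beta)}(\{\alpha,\beta\})\cap [Y].
\]
Finally, since $F = E|_{Z(\beta)}$ and $Z(\sigma) = Z(\alpha,\beta)\subset Z(\beta)$, we have $F|_{Z(\sigma)} = E|_{Z(\alpha,\beta)}$, so the Todd factors on the two sides match and the stated identity follows.

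The only point that requires real care is the first step: checking that the hypotheses of Theorem \ref{Thm1} (really, of the preceding discussion around Proposition 4.3 of Kiem--Li that it relies on) are met for the cone $C_{Z(\beta)}Y$ with the cosection $\sigma=\alpha|_{Z(\beta)}$. Here smoothness of $Y$ makes $C_{Z(\beta)}Y\hookrightarrow F$ the cone attached to the natural perfect obstruction theory $E^\vee|_{Z(\beta)}\to L_{Z(\beta)/Y}$ induced by the section $\beta$, and the cosection $\alpha$ is defined on all of $Y$, so it lifts to a cosection of the absolute obstruction sheaf, as required. Once that verification is noted, the rest of the argument is a one-line concatenation of the two earlier results.
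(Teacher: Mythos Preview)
Your proof is correct and matches the paper's intended argument: the corollary is stated without proof precisely because it is the immediate concatenation of Theorem~\ref{Thm1} with Lemma~\ref{taut:lemma} applied to $V=Y$, together with the identification $F|_{Z(\sigma)}=E|_{Z(\alpha,\beta)}$.

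One small remark on your justification that $C_{Z(\beta)}Y\subset Z(w_\sigma)$: invoking Kiem--Li's Proposition~4.3 and the perfect obstruction theory is valid, but heavier than necessary. The containment follows directly from $\langle\alpha,\beta\rangle=0$: on the deformation space $\overline{\Gamma}_\beta\subset |F|\times\PP^1$ of Lemma~\ref{taut:lemma} one has $\langle \tilde p^*\alpha, t_F\rangle|_{\overline{\Gamma}_\beta}=0$ identically (over $\mathbb{A}^1$ it equals $\lambda\langle\alpha,\beta\rangle=0$, and this persists to the closure), so the fiber at $\infty$, namely $C_{Z(\beta)}Y$, lies in $Z(w_\sigma)$. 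This is why the smoothness hypothesis on $Y$ is not actually needed for the identity itself---it is recorded only to match the original formulation in \cite{CLL}, as the subsequent Remark indicates.
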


\begin{Rmk}
The difference between Theorem \ref{Thm1} and Corollary \ref{CLL:Prop} is that the latter assumes that $Y$ is smooth. 
In section \S \ref{section:com_vir}, we will need Theorem \ref{Thm1}.
\end{Rmk}

\section{Comparisons of virtual classes}\label{section:com_vir}

We apply the bivariant property of localized Chern characters to the comparison of certain virtual classes.
In this section, let the base field $\bf k$ be the field of complex numbers.
\subsection{Conjecture}

Let $V_1, V_2$ be vector spaces over $\bf{k}$ and let a reductive algebraic group $G$ 
act on $V_1$ and $V_2$ linearly. Fix  a character $\theta$ of $G$ such that $V_1^{ss}(\theta) = V_1^{s}(\theta)$, i.e., there is no strictly semistable
points of $V_1$ with respect to $\theta$.
 Let $E:=[(V_1^{ss}(\theta)  \times V_2) / G ] $, $\bar{E} := [V_1\times V _2 /G]$, $V_1\ti V_2$
which are vector bundles on stack quotients $[V_1^{ss}(\theta) /G]$, $[V_1/G]$, $V_1$ respectively. 
Fix $w  \in ((\Sym^{\bullet \ge 1} V^{\vee}_1) \ot V_2^\vee)^{G}$. 
The polynomial $w$ induces sections $s$, $\bar{s}$ of $E^\vee$, $\bar{E}^\vee$  and also  morphisms $\uf$, $\underline{w}$ below:
       \[ \xymatrix{   \bar{E}^\vee  \ar[d]  & \ar@{_{(}->}[l] E^\vee  \ar[d]  \\ 
                         [V_1/G]    \ar@/_1pc/[u]_{\bar{s}}     & \ar@{_{(}->}[l]  [V_1^{ss}(\theta)/G] \ , \ar@/_1pc/[u]_{s}        } 
         \ \ \      \  \ \ \ \       \xymatrix{  V_1 \ar[r]^{\uf} &  V_2^\vee \ , \\
                                       E \ar[r]_{\underline{w}} & \mathbb{A}^1 \ , }  \]
                                       where $\uf$ is defined by the element corresponding to $w$
                                       in $$\Hom _{Alg} (\Sym ^{\bullet} V_2, \Sym ^{\bullet} V_1^{\vee}) = \Hom _{\CC} (V_2, \Sym ^{\bullet} V_1^{\vee} ) .$$ 
  This is so-called a geometric phase of a hybrid gauged linear sigma model; see \cite{MF}.
  Let $r$ be the dimension of $V_2$, i.e., the rank of $E$.
  We require that the critical locus $Z(d\underline{w})$ of the function $\underline{w}$ is a smooth closed locus in the zero section locus $[V_1^{ss}(\theta)/G] \subset  E^\vee $
  with codimension $r$. 
Note that canonically $Z(d\underline{w}) = Z(s)$.  

\subsubsection{Tangent Complex of $Z(\uf )$}
Let $\ke \in  \QQ _{> 0}$. 
We consider the moduli space $Q^{\ke}_{g, k} ( Z(s), d)$ of $\ke$-stable quasimaps to $Z(s)$
with type $(g, k, d)$ where $g$ is genus, $k$ is the number of markings, $d \in \Hom ( \hat{G} , \QQ   )$ is a fixed curve class  (see \cite{MOP, CKM}).
Here $\hat{G}$ is the character group of $G$.
The stable quasimaps to $Z(s)$ are certain maps to the Artin stack $Z(\bar{s})$, not necessarily to $Z(s)$.
The moduli space is a separated DM stack 
 over the affine quotient $\Spec  (\Sym^{\bullet} V_1^\vee )^G$. 
It comes with a canonical virtual fundamental class denoted by $[Q^{\ke}_{g, k} ( Z(s) , d)]^{\vir}$; see \cite{CKM, CCK}.

\bigskip

\noindent{\em Conventions}: 
Let $\fM _{g, k}(BG, d)$ be the moduli space of principal $G$-bundles $P$ on genus $g$, $k$-marked prestable orbi-curves $C$ with degree $d$ such that
the associated classifying map $C \ra BG$ is representable.
The algebraic ${\bf k}$-stack $\fM _{g, k}(BG, d)$ is smooth; see \cite{CKM, CCK}.
Let $\cP$ be the universal $G$-bundle on $\fC$ and let ${\bf u} : \fC \ra \cP \times_G V_1$ be the universal section.  
Let $\pi$ be the universal curve map  $\fM _{g, k}(BG, d)$. Let $\cV_1 := \cP \times _G V_1$ and $\cV_2 : = \cP \times _G V_2$.
 By abusing notation, $\pi$ will also denote the universal curve on various moduli spaces over $\fM _{g, k}(BG, d)$. For example,
$\pi: \fC \ra Q^{\ke}_{g, k} ( Z(s), d)$ denotes also the universal curve and $\cP$ denotes also the universal bundle on this $\fC$.
Therefore, we may consider also $\cV_i$ as a vector bundle on the universal curve over $\fC \ra Q^{\ke}_{g, k} ( Z(s), d)$.

\bigskip

Consider a complex of cotangent bundles
\begin{align}\label{ds:complex} V_1\times V_2 = \uf^*\Omega_{V_2^\vee}  \xrightarrow{d\uf}  \Omega_{V_1} = V_1 \times V_1^\vee .  \end{align}
Its dual restricted to the affine scheme $Z(\uf ) \subset V_1$ is the tangent complex of $Z(\uf )$ since it is a complete intersection scheme.
By pulling back the dual of  \eqref{ds:complex} to the universal curve over  $Q^{\ke} _{g, k}( Z(s) , d)$ and then pushforward by $\pi$
we obtain \begin{align} \label{Obst} \RR \pi _* ({\bf u}^* (\cP \ti _G d\uf ^\vee)) : 
 \RR \pi _* \cV_1  \ra  \RR \pi _*  \cV_2^\vee  ; \end{align}
see the proof of Proposition 4.4.1 of \cite{CKM}.
The dual of \eqref{Obst} is the canonical perfect obstruction theory for $Q^{\ke}_{g, k} ( \Zs , d)$ {\em relative to $\fM _{g, k}(BG, d)$}, defining 
$[Q^{\ke}_{g, k} ( \Zs , d)]^{\vir}$.

\subsubsection{LG quasimaps}\label{LG:Moduli}
On the other hand, we may consider the moduli space $LGQ^{\ke}_{g, k} (E, d)'$, for short $\LGQ '$, 
of genus $g$, $k$-pointed, degree $d$, $\ke$-stable quasimaps to $V_1/\!\!/G$ with $p$-fields; see \cite{CL, FJR:GLSM, MF}. 
Here by a $p$-field we mean an element in $H^0(C, \cV_2|_{C} \ot \omega _C)$, where $C$ is a
domain curve.  Note here that we use $\omega$ instead of $\omega _{\log}$ which is used usually in gauged linear sigma model (\cite{FJR:GLSM}). 
For simplicity, let us call $\LGQ '$ the moduli of LG quasimaps to $E$.
Due to the twisting by $\omega _C$, an LG quasimap to $E$ is not a map to $E$ even for larger enough $\ke$.

Let  $\mathbb{L} ^{\bullet}_{LGQ' / \fM _{g, k}(BG, d)}$  denote the cotangent complex of $LGQ'$ relative to $\fM _{g, k}(BG, d)$.
By the same idea of \cite{CCK}, it is clear that $LGQ'$  comes with a perfect obstruction theory 
\begin{align} \label{LGObst}  \RR ^{\bullet} \pi_* (\cV_1\oplus \cV_2\ot \omega _{\fC}) ^\vee  \ra \mathbb{L} ^{\bullet}_{LGQ' / \fM _{g, k}(BG, d)}  
\end{align}
relative to $\fM _{g, k}(BG, d)$.
By \cite{CL, FJR:GLSM, MF} there is a cosection
$$dw_{\LGQ '}: \RR ^1\pi_* (\cV_1\oplus \cV_2\ot \omega _{\fC}) \ra \cO_{LGQ'} , $$
where, by abusing notation, $\cV_i := \cP \times _G V_i$ with $\cP$ the universal $G$-bundle on the universal 
curve $\fC$ on $\LGQ '$. 

We recall the definition of $dw_{\LGQ '}$. Fix a positive integer $d_f$ such that $w \in \oplus _{a=1}^{ d_f}  (\Sym ^{a} V_1^{\vee}) \ot V_3$.
Let  $\Sym^{\star} V_1:= \oplus _{a=0}^{d_f} \Sym^a V_1$ and from what follows we will use the
duality $\Sym ^m  V_1 ^{\vee}  = (\Sym ^m V_1 )^{\vee}$ 
for each non-negative integer $m$ by the non-degenerate pairing
\begin{align*} \Sym^m V_1 \ot \Sym^m (V_1^{\vee})  & \ra \CC  \  , \\
 [v_1 \ot ... \ot v_m] \ot [v_1^* \ot ... \ot v_m^*] & \mapsto \sum_{\sigma \in \mathfrak{S}_m }  v_1^*(v_{\sigma (1)}) ... v_m^*(v_{\sigma (m)} )  . \end{align*}

From the differential of $w$ we may consider ${\bf k}$-linear map
$$ dw : \Sym^{\star-1} V_1 \ot V_1 = (\Sym^{\star-1} V^{\vee}_1 \ot V^{\vee}_1 )^{\vee} \to V_2^\vee, $$
 which induces a map in derived category
 \begin{equation} \label{Rpidw} \RR \pi _* dw : \Sym^{\star-1} \RR \pi_* \cV_1 \ot \RR \pi_* \cV_1 \to \RR \pi_* \cV_2^\vee . \end{equation}
 We refer Section 4.1 of \cite{PV:A} for the definition of $\Sym^{\star -1} \RR \pi_* \cV_1 := \oplus_{i=0}^{d_f-1} \Sym^{i} \RR \pi_* \cV_1 $
 as a complex well-defined up to quasi-isomorphisms. Later we will use also 
 the complex $\Sym^{\star} \RR \pi_* \cV_1 := \oplus_{i=0}^{d_f} \Sym^{i} \RR \pi_* \cV_1 $.

 We consider  the truncated exponential maps $\exp^{\star}, \exp^{\star-1}$ defined by 
\begin{equation} \label{expmap}
\exp ^{\star}, \exp^{\star-1}: \RR ^0  \pi_* \cV_1 \ra \Sym^{\star} \RR ^0 \pi_* \cV_1, \ u \mapsto \sum_{i=0}^{d_f} \frac{u^{\ot i}  }{i!} , \sum_{i=0}^{d_f-1} \frac{u^{\ot i}  }{i!}
\end{equation} respectively
 and  the  Serre-Grothendieck-Verdier duality pairing $\Res$
\begin{equation} \label{ResPairing} \RR ^1 \pi_* \cV _2 ^\vee   \ot \RR ^0 \pi _* (\cV_2 \ot \omega _{\fC})  \oplus \RR ^1 \pi_* (\cV _2  \ot \omega _{\fC}) \ot \RR ^0 \pi _* \cV_2 ^\vee 
\to \cO . 
\end{equation}
Let $$ H^1(\RR \pi _* dw ) : \RR^1 \pi_* \cV_1 \ot \Sym^{\star - 1} \RR^0\pi_* \cV_1 \to \RR ^1\pi_* \cV_2^{\vee}$$ be the part of the $H^1$ of the map \eqref{Rpidw}.
Then for $(u, p) \in LGQ'$ omitting a curve, markings and a principal bundle for easy notation, and for $(u',p')\in \RR ^1\pi_* (\cV_1 \oplus \cV_2\ot \omega _C )$, we define
\begin{equation}\label{def:dw} dw_{\LGQ '}|_{(u, p)} (u', p') := \Res ( \left( H^1(\RR \pi _* dw )(u' \ot \exp^{\star - 1} (u)) \right) \ot p  +  p' \ot \uf (u) ) .  \end{equation}

\begin{example} For $G=\GG _m$ the multiplicative group, 
$dw_{\LGQ '}$ has the following explicit description:  for $(u', p') = (u'_i, p'_j)_{i, j} \in \RR ^1\pi_* (\cV_1\oplus \cV_2\ot \omega _{\fC})$ 
at $(u, p) = (u_i, p_j)_{i, j} \in \RR ^0 \pi_* (\cV_1\oplus \cV_2\ot \omega _{\fC})$,
\begin{align*}  \dwq |_{(u, p)} (u', p')   = \Res (   \sum_{i, j}  u'_i  \frac{\partial \uf _j (u)}{\partial u_i}  \ot p_j + \sum_j  p'_j \ot  \uf _j (u)   )  \end{align*}
where $i, j$ run for $1, ..., \dim V_1$, $1, ..., \dim V_2$, respectively, and $\uf=(\uf_j)_j$. 
\end{example}

Let a global vector bundle complex $[\cF^0 \ra \cF^1]$ represent $\RR \pi _* (\cV_1\oplus \cV_2\ot \omega _{\fC})$.
Then induced from $\dwq$ there is a cosection   of $\cF^1$, which will be denoted also by $\dwq$:
\begin{equation} \label{dwq: 2} \dwq : \cF^1 \to \cO_{LGQ'} . \end{equation}
The zero locus of the cosection $\dwq$, i.e., the locus defined by the ideal sheaf $\mathrm{Im} (\dwq )$, coincides with 
$Q^{\ke}_{g, k} ( \Zs , d)$.  This is a special case of Proposition 3.6.1 of \cite{MF}. Thus we have
 \[ Z(\dwq) =  \{ (u, p ) \in \LGQ' :  \uf (u) = 0 , p = 0 \} = Q^{\ke}_{g, k} ( \Zs , d). \]

Let $p: \cF^1 \ra \LGQ '$ be the projection.  It can be shown that $p^* \dwq \circ t_{\cF^1} = 0$ on the {\em support} of the {\em relative} intrinsic normal cone of $\LGQ '$ over $\fM_{g, k}(BG, d)$;
see \cite{FJR:GLSM}. In fact it will be shown later in Corollary \ref{van} that $p^* \dwq \circ t_{\cF^1} = 0$ on the cone; see also \eqref{Res:rest} to check why 
the cosection descends to that of the absolute obstruction sheaf. 
Hence by the cosection localization method or equivalently by  applying the localized Chern character of  $\{p^* \dwq, t_{\cF^1}\}$ to the obstruction cone in $\cF^1$ (see Corollary \ref{CoVir:Ch}),
we obtain a virtual class  $[\LGQ ']^{\vir}_{\dwq }$ supported in $Z(p^* \dwq, t_{\cF^1}) = Z(\dwq)$.

According to Chang and Li \cite{CL}; and Fan, Jarvis, and Ruan \cite{FJR:GLSM}, we expect the following. 
\begin{Conj}\label{conj}
In $A_*(Q^{\ke}_{g, k} ( \Zs , d))_{\QQ}$,
\begin{align}\label{Vir:Eq} [Q^{\ke}_{g, k} ( \Zs , d)]^{\vir} = (-1)^{\chi ( \cV ^{\vee}_2 ) } [LGQ^{\ke}_{g, k} (E, d)']^{\vir}_{dw_{LGQ'}} 
\end{align}
where $\chi ( \cV ^{\vee}_2 )$ is the virtual rank of  $\RR \pi _* \cV ^{\vee}_2$.
\end{Conj}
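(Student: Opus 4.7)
The plan is to invoke Corollary \ref{CoVir:Ch} on the right-hand side of \eqref{Vir:Eq} to rewrite the cosection-localized virtual class as a bivariant localized Chern character of the tautological Koszul complex applied to the intrinsic normal cone, and then to identify the result with the left-hand side via the splitting principle (Lemma \ref{SP1}) combined with Serre duality on the universal curve.

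I would first fix a global two-term resolution $[\cF^0 \to \cF^1]$ of $\RR\pi_*(\cV_1 \oplus \cV_2 \ot \omega_{\fC})$ respecting the direct sum decomposition, so that $\cF^i = \cA^i \oplus \cB^i$ with $\cA^\bullet$ resolving $\RR\pi_*\cV_1$ and $\cB^\bullet$ resolving $\RR\pi_*(\cV_2 \ot \omega_{\fC})$. After verifying (to be proved in Corollary \ref{van}) that $p^* dw_{LGQ'} \circ t_{\cF^1}$ vanishes on the relative intrinsic normal cone $C \subset \cF^1$, Corollary \ref{CoVir:Ch} expresses the right-hand side of \eqref{Vir:Eq} as
\[
\td(\cF^1|_Q) \cdot \ch^{\bullet}_Q\bigl(\{p^*dw_{LGQ'},\, t_{\cF^1}\}\bigr) \cap [C],
\]
a localized Chern character operation supported on $Q := Q^{\ke}_{g,k}(Z(s),d) = Z(dw_{LGQ'})$.

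Next I would exploit the block structure of $dw_{LGQ'}$ visible from \eqref{def:dw}: the $\cA^1$-component of the cosection is contraction of $u'$ with $df(u)$ paired against the $p$-field, while the $\cB^1$-component is pairing of $p'$ against $f(u)$. Under Serre duality $\RR\pi_*(\cV_2 \ot \omega_{\fC}) \simeq (\RR\pi_*\cV_2^\vee)^\vee[-1]$ on the universal curve, the $\cB^1$-part dualizes to a tautological section of a bundle representing $\RR\pi_*\cV_2^\vee$, whose vanishing locus is exactly the sublocus of stable quasimaps landing in $Z(f) = Z(s)$, while the $\cA^1$-part dualizes to the morphism \eqref{Obst} defining the canonical obstruction theory of $Q^{\ke}_{g,k}(Z(s),d)$ relative to $\fM_{g,k}(BG,d)$. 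Applying the splitting principle Lemma \ref{SP1} to this decomposition, and using the Koszul duality \eqref{duality} to swap the section- and cosection-roles on the $\cB$-side, the localized Chern character factors as a composition of two Gysin-type operations: one matches the Gysin pullback by the section $f(u)$ cutting out $Q$, and the other matches the standard virtual class construction for $Q^{\ke}_{g,k}(Z(s),d)$ from \eqref{Obst}. The sign $(-1)^{\chi(\cV_2^\vee)}$ arises from the degree shift in \eqref{duality} applied to a complex of virtual rank $\chi(\cV_2^\vee)$.

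The main obstacle is the careful comparison of the two relative intrinsic normal cones involved. The cone $C$ of $LGQ'$ over $\fM_{g,k}(BG,d)$ sits in $\cF^1$, whereas the cone $C'$ of $Q^{\ke}_{g,k}(Z(s),d)$ over $\fM_{g,k}(BG,d)$ sits in a bundle resolving $\RR^1\pi_*\cV_1|_Q$. A deformation-to-the-normal-cone argument in the spirit of Lemma \ref{taut:lemma}, together with the fact that the universal section ${\bf u}$ factors through $Z(f)$ along $Q \subset LGQ'$, is needed to identify the images of $[C]$ under the two Gysin pullbacks with $[C']$; once this identification is established, the asserted equality \eqref{Vir:Eq} with the claimed sign follows.
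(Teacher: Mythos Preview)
Your overall strategy---rewriting the cosection-localized class via Corollary \ref{CoVir:Ch} and then trying to factor the resulting Koszul complex---matches the paper's in spirit, but the direct application of Lemma \ref{SP1} that you propose does not go through as stated. The hypothesis of Lemma \ref{SP1} requires the cosection $\alpha$ to factor through a quotient bundle $Q$ of $E$; in your setup this would mean that $dw_{LGQ'}$ restricted to the summand $\cA^1$ vanishes. It does not: by \eqref{def:dw} the $\cA^1$-component of $dw_{LGQ'}$ at a point $(u,p)$ is $u' \mapsto \Res(H^1(\RR\pi_* dw)(\exp(u)\ot u') \ot p)$, which is nonzero whenever the $p$-field $p$ is nonzero. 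The coupling of $u'$ with $p$ through $df$ is exactly what prevents the na\"ive splitting $\cF^1 = \cA^1 \oplus \cB^1$ from separating the cosection into a quotient-factored piece, so Lemma \ref{SP1} is not applicable to $\{p^*dw_{LGQ'}, t_{\cF^1}\}$ in the way you describe.

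The paper resolves this by an auxiliary construction you are missing. It enlarges the ambient space by an extra factor $\tot Q$ with $Q = A_3$ (a bundle resolving $\RR^0\pi_*\cV_2^\vee$), and builds on $U_{PQ}\times\mathbb{A}^1$ a $\lambda$-dependent section $\beta$ whose third component is $\phi_{A_1} - \lambda t_Q$. At $\lambda=1$ the zero locus of $\beta$ projects isomorphically to $LGQ'$ and one recovers the cosection-localized class; at $\lambda=0$ the constraint $\phi_{A_1}=0$ forces the underlying quasimap to land in $Z(s)$, and the cosection $\sigma$ \emph{does} decouple into a piece factoring through $P^\vee\oplus Q^\vee$ plus a piece of the form $\{0, t_K\}$ for the kernel bundle $K$ realizing the obstruction theory of $Q^{\ke}_{g,k}(Z(s),d)$. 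Bivariance of the localized Chern character under the refined Gysin map $\lambda^!$ (equation \eqref{Vir:Exp}) makes the two specializations agree, and the cone comparison you flag as ``the main obstacle'' is handled precisely by this $\lambda$-interpolation together with the explicit identification $C_{Z(\beta_0)}U_{PQ} = C_{Q^\ke_X}U^\ke \times_{\fBo}|P|\times_{\fBo}|Q|$. Without this auxiliary $Q$-factor and the $\lambda$-homotopy, your Serre-duality-and-split argument leaves the cone comparison as an unresolved gap.
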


For a smooth quintic $\Zs$ in $\PP^4$, a pioneering work of Chang and Li \cite[Theorem 1.1]{CL} shows that
Conjecture \ref{conj} with $k=0$, $\ke >> 0$, and $d>0$ holds true numerically, i.e., after passing to the singular homology:
$A_0(Q^{\infty}_{g, 0} ( \Zs , d))_{\QQ} \to H_0 (Q^{\infty}_{g, 0} ( \Zs , d), \QQ ) = \QQ$ for $d> 0$. Here $\infty$ means $\ke$ is 
large enough.

\subsection{Proof of the Conjecture}

Before proceeding our proof, we remark the other's works on the conjecture. 

\bigskip

\begin{Rmk} After an announcement of the above result,
F. Janda told the first author that he, Q. Chen, and R. Webb are working 
on a proof of the conjecture using torus localization for cosection 
localized virtual classes \cite{ChangKiemLi}.\footnote{Their paper is  appeared in \cite{CJW}.}
After our paper is appeared in arXiv, the paper of H. Chang and M. Li  \cite{ChangML} appeared also in arXiv, 
showing the above result when $[V_1^{ss}/G]$ is the projective space
with $\G=\mathbb{G}_m$ and $Z(s)$ is a hypersurface.  
Their proof uses, among other things from the original proof of H. Chang and J. Li \cite{CL} in a special case, 
the degeneration of \lq target' 
$E$ to the normal cone $C_{Z(s)}E$ . 
A similar degeneration appears also in our proof, too; see $\tilde{U}_{PQ}$ and $\beta$ in
\S \ref{M_degen}. There is another paper \cite{Picc} by R. Picciotto on the comparison showing a generalization of  quantum Lefschetz hyperplane principle.  
\end{Rmk}

\bigskip

For easy notation, let  $\fB := \fM _{g, k}(BG, d)$, $X:=Z(s)$, $Q_X^{\ke} := Q^{\ke}_{g, k} ( X, d)$,
$Q_{V_1}^{\ke} := Q^{\ke}_{g, k} ( V_1/\!\!/G , d)$;
 and let $\pi: \fC \ra \fB$ be the universal curve.

\subsubsection{Construction of $\phi _{A_1}$, $\bar{\phi} _{B_1}$} \label{CochainReal}
For simple notation, let $V_3$ be the dual vector space of $V_2$.
Recall that $\Sym ^{\star}$ denotes $\oplus _{a=0}^{d_f} \Sym ^{a}$ for some positive integer $d_f$ 
such that $w \in \oplus _{a=0}^{ d_f}  (\Sym ^{a} V_1^{\vee}) \ot V_3$.
Let $f: \Sym^{\star} V_1 \to V_3$ be a linear map induced from $w\in \Sym^{\star} V_1^\vee \ot V_3$.
Combining with the natural homomorphism $nat$ (see Section 3.2.1 of \cite{MF} for the definition) we get
\begin{align}\label{der:maps}  \Sym^{\star} \RR \pi_* \cV_1 \xrightarrow{nat} \RR \pi_* \Sym ^{\star} \cV_1 \xrightarrow{\RR \pi _* f}  \RR \pi _* \cV_3  \end{align}
on $\fB$ where $\cV_3 := \cV_2 ^\vee$. Here for the definition of 
$\Sym^{\star} := \oplus _{a=0}^{d_f}\Sym ^a$ operator (in particular for two-term complexes), see \S 4.1 of \cite{PV:A}.
The maps in \eqref{der:maps} are maps in the derived category of coherent sheaves. 
We seek for the cochain maps which represent those maps at least locally by the following two steps.

\medskip

{\em Step 1.} For some positive integer $m_0$, 
let $\cO (1) := (\omega _{\fC}^{\log} \ot (\cP \times _G \CC_{\theta} )^{\ke} )^{m_0}$  whose pullback to $LGQ'$ is $\pi$-ample.
We take an open substack of $\fBo$ of $\fB$ such that
the map $LGQ' \ra \fB$ is factored through $\fBo$ and where 
$ \cO (1)$ 
is still $\pi$-ample. We carry out the following construction over $\fBo$.

We first take a $\pi$-acyclic, locally free resolutions of $\cV_1$ for large enough $l$ 
\begin{align*} 0 & \ra  \cV_1 \xrightarrow{h} \cA_1= 
\pi^* (\pi_*(\cO (l)))^\vee \ot \cV_1 (l)  
\ra \cB_1  \ra 0 ,
 \end{align*} 
 where $\cB_1$ is defined to be the cokenel. This is an exact sequence of vector bundles on the universal curve on $\fB ^{\circ}$.
There are the induced homomorphisms $\Sym ^{\star} h: \Sym^{\star} \cV_1\ra \Sym^{\star} \cA_1$ and $ f_{\cV_1}:= \cP \times _G f : \Sym^{\star} \cV_1 \ra \cV_3$.

We next want to construct a $\pi$-acyclic resolution 
of $\cV_3$:
$$0\to \cV_3 \ra \cA_3 \ra \cB_3 \ra 0$$
with a homomorphism $\Sym^{\star} \cA_1 \to \cA_3$ 
compatible with $\Sym ^{\star} h, f_{\cV_1}$ (see \eqref{res:V_3}).
For this construction, we consider the cokernel $\cV_3'$ of $$   (f_{\cV_1}, - \Sym ^{\star} h) :  \Sym^{\star} \cV_1 \to   \cV_3 \oplus   \Sym^{\star} \cA_1   .   $$  
  Note that the induced map $\cV_3 \ra \cV_3'$ is an inclusion of vector bundles.
The locally free sheaf  $\cV_3'$ of finite rank
 has $\pi$-acyclic locally free resolution $0\ra \cV_3' \ra \cA '_3 \ra \cB'_3\ra 0$ by the same method above for a $\pi$-acyclic resolution of $\cV_1$.
Let us take $\cA_3:= \cA '_3$. Consider the map $\cV_3 \ra \cA_3$ which is given by the composition of 
 inclusions $\cV_3 \ra \cV_3' \ra \cA_3$. 
 This gives rise to a $\pi$-acyclic, locally free resolutions of $\cV_3$
 \begin{align*} 0 & \ra  \cV_3 \ra \cA_3  
 \ra \cB_3  \ra 0 ,
 \end{align*}
 where $\cB_3$ is defined to be the cokernel.  
 Note that $\cB_3$ is $\pi$-acyclic since $\cA_3$ is $\pi$-acyclic.
 Now combining those two resolutions of $\cV_1$, $\cV_3$, we have a natural chain map of exact sequences
\begin{eqnarray}\label{res:V_3} \xymatrix{
0 \ar[r] &  \Sym^{\star} \cV _1 \ar[r]_{\Sym^{\star} h} \ar[d]_{f_{\cV_1}}  &  \Sym^{\star} \cA_1 \ar[r] \ar[d]_{f_{\cA_1}} & \mathrm{Coker}_1 \ar[d]_{ f_{\cB_1}|_{\Coker_1}  } \ar[r] & 0    \\
   0 \ar[r] &  \cV_3 \ar[r] &     \cA_{3}  \ar[r]      &   \cB_{3}  \ar[r] & 0 , } \end{eqnarray}
   where $f_{\cA_1}$ is the composition  $\Sym^{\star} \cA_1 \ra \cV'_3 \ra \cA_3$; 
   $f_{\cB _1}|_{\Coker _1}$ is determined by  $f_{\cA _1}$; and $\mathrm{Coker}_1$ is defined as the 
   the quotient $\Sym^{\star} \cA_1 / \Sym^{\star} \cV_1$.

\medskip
{\em Step 2.} Furthermore let us take $\cA_1$ such that the natural map $\pi^*\pi_* \cA_1 \ra \cA_1$ is surjective.
This in turn implies that $\pi^*\pi_* \cB_1 \ra \cB_1$ is surjective and $\RR ^1 \pi _* (\cA_1^{\ot i} \ot \cB_1^{\ot j}) = 0$ for $i+j > 0$.
Thus $\RR ^1 \pi _* (\Sym ^i \cA_1 \ot \Lambda ^j \cB_1^{\ot j}) = 0$ and
 $$ \pi_* \Sym^{\star} [\cA_1 \ra \cB_1]     \xrightarrow{\text{qisom}}        \RR \pi _* \Sym^{\star} \cV _1 .$$
The morphism \eqref{der:maps} in the derived category is realized as each individual natural map as below except the dashed arrow.
\[ \xymatrix{ \Sym^{\star} \pi_* \cA_1 \ar[r] \ar[d]^{nat} & \Sym^{\star - 1} \pi_* \cA_1 \ot \pi_* \cB_1 \ar[r] \ar[d]^{nat} 
& \Sym^{\star - 2} \pi_* \cA_1 \ot \Lambda ^2 \pi_* \cB_1 \ar[r] \ar[d]^{nat} &  \cdots \\
                      \pi_* \Sym ^{\star} \cA _1 \ar[r] \ar[d]^{=}& \pi_* (\Sym ^{\star -1} \cA_1 \ot  \cB _1 ) \ar[r]_{\pi_* \partial _2}  \ar@{-->}[d] 
                                            &        \pi_* (\Sym^{\star -2} \cA_1 \ot  \Lambda ^2 \cB _1 ) \ar[r] \ar[d] & \cdots    \\
                                     \pi_* \Sym^{\star} \cA _1 \ar[r] \ar[d]^{\pi_* f_{\cA _1}}   &    \pi_*( \Coker_1 = \Ker (\partial _2) ) \ar[d]^{\pi_*f_{\cB _1}|_{\Coker_1}}  \ar[r]  &   0 \ar[r] \ar[d]   & \cdots \\  
                            \pi_* \cA_{3}  \ar[r]      &  \pi_* \cB_{3}  \ar[r]  & 0 \ar[r]  & \cdots } \]  
                            where $\partial _2: \Sym^{\star - 1} \cA_1 \ot  \cB _1 \ra \Sym^{\star -2} \cA_1 \ot \Lambda ^2 \cB _1$ is the differential and
                            $\star - 1$, $\star -2$ denote  the range of $[ 0, d_f -1]$, $ [0, d_f -2]$, respectively.
                            In other words, the first two rows present a cochain map representative of $nat$ in \eqref{der:maps}, 
                            the second and third rows are quasi-isomorphic, and the last two rows present a cochain map representative of $\RR \pi_* f$ in \eqref{der:maps}.
                            Here $nat$ is the natural map followed by projection. 
                            Note that canonically $\Coker _1 \cong \Ker \partial _2$.
 
Now, we will find a realization of the dotted arrow locally.
 By taking $\pi_*f_{\cB _1}|_{\Coker_1} \circ nat $, we obtain a $\cO_{\fBo}$-homomorphism
 \[ \varphi _{B_1}|_{\pi_*\cV _1} : \Sym^{\star - 1} \pi _* \cV _1 \ot B_1 \to B_3 , \] 
 where $B_i := \pi _* \cB _i$. 
 We want to find a lift of $\varphi _{B_1}|_{\pi_*\cV} $
 \[     \varphi _{B_1} ^{loc}: \Sym^{\star - 1} A_1  \ot B_1 \to B_3 \]
 locally on $\fBo$. Here $A_i := \pi _* \cA_i$.
Note that $\pi_* \Ker (\partial  _2) \ra  \pi_* (\Sym ^{\star - 1} \cA_1 \ot  \cB_1 )$ is a locally split monomorphism since its cokernel is locally free. 
Hence locally there is a dotted arrow making a quasi-isomorphism between the middle two complexes.  

In summary, we found that {\em locally} on $\fB^\circ$ there is $\varphi ^{loc}_{B_1}$ fitting in a cochain realization of $\Sym^{\star} \RR \pi _* \cV_1 \ra \RR\pi_* \cV_3$, i.e., \eqref{der:maps}:
\begin{equation}\label{AboveDiagram} \xymatrix{ \Sym^{\star} A_1 \ar[r] \ar[d]_{\varphi _{A_1}} & \Sym^{\star - 1} A_1 \ot B_1 \ar[r] \ar[d]_{\varphi ^{loc}_{B_1}} & \Sym^{\star - 2} A_1 \ot \Lambda ^2 B_1 \ar[r] \ar[d] &  \cdots \\
                             A_3 \ar[r]      &  B_3 \ar[r] & 0 \ar[r] & \cdots } \end{equation}
where $\varphi _{A_1}$ is the global $\varphi _{A_1} := \pi_* f_{\cA _1}  \circ nat $ restricted to the local chart.

\bigskip

{\em Construction of $\phi _{A_1}$, $\bar{\phi} _{B_1}$.}
We take an open substack $U^{\ke}$ of $\tot A_1$ such that $U^{\ke}$ is a DM stack and 
$Q^{\ke}_{V_1} := Q^{\ke}_{g, k} ( V_1/\!\!/G , d)$ is naturally a closed substack of $U^{\ke}$.
 So far, we found a cochain map (or, local cochain map) representatives on $\fBo$.
In below, we will define $\phi_{A_1} \in H^0 (U^{\ke}  , A_3|_{U^{\ke}} )$ and $\bar{\phi}_{B_1} \in \Hom_{\cO_{Q^{\ke}_{V_1}} } (B_1|_{Q^{\ke}_{V_1}} , B_3|_{Q^{\ke}_{V_1}} )$ 
from $\varphi_{A_1}$ and $\varphi _{B_1}|_{\pi_*\cV _1}$, respectively.
In Section \ref{M_degen} - \ref{Setup2} they will play some role in perfect obstruction theories \eqref{Obst} and \eqref{LGObst}.

We first 
 recall that the notion of a tautological section $t_{-}$ is introduced in the beginning of Section \ref{taut} and the exponential map $\exp$ is introduced in \eqref{expmap}.
Hence we have sections $\exp^{\star} (t_{A_1}) \in H^0 ( U^{\ke},  \Sym ^{\star} A_1|_{U^{\ke}})$
and $\exp^{\star -1}  t_{V_1} \in H^0 (Q^{\ke}_{V_1},  \Sym ^{\star - 1} \pi_* \cV_1|_{Q^{\ke}_{V_1}})$,
where $t_{V_1} = t_{A_1} |_{Q^{\ke}_{V_1}}$ is a section of $\pi_* \cV_1|_{Q^{\ke}_{V_1}}$.
We define a section and a homomorphism
 \begin{align}  \phi _{A_1} & := \varphi _{A_1}|_{U^{\ke}} \circ \exp^{\star} (t_{A_1}) \in  H^0 (U^{\ke},  A_3|_{U^{\ke}} ) ;   \nonumber \\ 
                    \label{def:barphi_B}     \bar{\phi}  _{B_1}  & := \varphi _{B_1}|_{Q^{\ke}_{V_1}} \circ (\exp^{\star -1} (t_{V_1})\ot - ) : B_1|_{Q^{\ke}_{V_1}} \ra B_3|_{Q^{\ke}_{V_1}} .
      \end{align} 
      Define $\phi_{B_1}^{loc}$ by $\varphi ^{loc}_{B_1} (\exp ^{\star -1} a\ot - )$ for $a\in A_1$ at the local chart, extending $\bar{\phi}_{B_1}$.

Let $d_{A_i}: \pi_* \cA_i =:A_i \ra \pi_* \cB_i = :B_i$ denote the differential maps induced from the differential maps
$\cA_i \to \cB_i$. By abuse notation, $d_{A_i}$ will denote its pullback to $U^{\ke}$ or its local charts.
The following lemma will be used in Section \ref{M_degen} to show that 
the cosection descends to that of the absolute obstruction sheaf.

\begin{Lemma}\label{key:com}  The equality 
$d_{A_3}\circ \phi _{A_1}   - \phi _{B_1}^{loc} \circ d_{A_1} \circ t_{A_1} = 0$ holds as  local sections of $B_3|_{U^{\ke}}$. \end{Lemma}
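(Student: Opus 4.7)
The plan is to read the desired identity off from the commutativity of the first square in diagram \eqref{AboveDiagram}, evaluated on the exponential section $\exp(t_{A_1})$. Recall that the top row of \eqref{AboveDiagram} is the Koszul-style complex whose differential in bidegree $(m,0)$ is the map
\[ \partial_{\Sym}: \Sym^m A_1 \longrightarrow \Sym^{m-1} A_1 \otimes B_1, \qquad v_1\cdots v_m \longmapsto \sum_{i=1}^m v_1\cdots \widehat{v_i}\cdots v_m \otimes d_{A_1}(v_i), \]
and that \eqref{AboveDiagram} is a cochain map. So the first square commutes:
\[ d_{A_3}\circ \varphi_{A_1} \;=\; \varphi_{B_1}^{loc}\circ \partial_{\Sym}. \]

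The next step is the computational heart of the lemma: the identity
\[ \partial_{\Sym}\bigl(\exp(t_{A_1})\bigr) \;=\; \exp(t_{A_1}) \otimes d_{A_1}(t_{A_1}) \]
in $(\Sym^{\star-1} A_1 \otimes B_1)|_{U^{\ke}}$. This is immediate from the Leibniz rule: expanding $\exp(t_{A_1})=\sum_{m\ge 0}\tfrac{1}{m!}t_{A_1}^{m}$ and applying $\partial_{\Sym}$ termwise yields $\sum_{m\ge 1}\tfrac{m}{m!}t_{A_1}^{m-1}\otimes d_{A_1}(t_{A_1}) = \exp(t_{A_1})\otimes d_{A_1}(t_{A_1})$.

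Combining the two, I would compute
\[ d_{A_3}\circ \phi_{A_1} \;=\; d_{A_3}\circ \varphi_{A_1}\bigl(\exp(t_{A_1})\bigr) \;=\; \varphi_{B_1}^{loc}\bigl(\exp(t_{A_1})\otimes d_{A_1}(t_{A_1})\bigr), \]
and then recognize the right-hand side as $\phi_{B_1}^{loc}\circ d_{A_1}\circ t_{A_1}$ by the very definition of $\phi_{B_1}^{loc}=\varphi_{B_1}^{loc}(\exp(a)\otimes -)$ on the local chart. This gives the asserted vanishing.

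There is no serious obstacle here: all the content was packaged into the construction of diagram \eqref{AboveDiagram} as a local cochain realization of the derived map \eqref{der:maps}. The only thing to double-check is that $\partial_{\Sym}$ is indeed the symmetric-power Koszul differential (so that the Leibniz computation of $\partial_{\Sym}(\exp(t_{A_1}))$ is valid), which follows from the definition of the $\Sym^{\star}$ operator on two-term complexes in \cite[\S 4.1]{PV:A}, and that the substitution of $\exp(t_{A_1})$ is compatible with the pairing $(\Sym^m A_1)^\vee = \Sym^m(A_1^\vee)$ used to pass from $\varphi_{A_1}$ to its universal evaluation $\phi_{A_1}$, which was arranged in the construction just preceding the lemma.
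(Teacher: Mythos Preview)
Your proof is correct and follows essentially the same route as the paper's own argument: reduce to the commutativity of the first square in diagram \eqref{AboveDiagram}, then evaluate on $\exp(t_{A_1})$. The paper states this in one line, whereas you have helpfully made explicit the Leibniz computation $\partial_{\Sym}(\exp(t_{A_1}))=\exp(t_{A_1})\otimes d_{A_1}(t_{A_1})$ that underlies it.
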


\begin{proof}
The equality means that $d_{A_3} \circ \varphi _{A_1} (\exp^{\star} a) =  \varphi^{loc} _{B_1} (\exp^{\star -1} a  \ot d_{A_1}(a)) $ for every $a\in A_1$.
This  is nothing but the commutativity of the first square in diagram \eqref{AboveDiagram}.
\end{proof}

\subsubsection{Paring and Residue map} \label{Sect:Res}
From now on in this section we will also use notation that 
\[ P:= B_3^\vee \text{ and } Q := A_3. \] Hence
$[Q \xrightarrow{d_Q} P^{\vee}]$ represents $\RR \pi _* (\cV_3:= \cV_2^{\vee})$.
Here the notation $P$ and $Q$ are named after `$p$-fields' introduced in \cite{CL} ($(R^1\pi _* \cV_3 )^{\vee} = \pi _* (\cV_3 \ot \omega _{\fC}) = \{ \text{\lq$p$-fields'} \}$)
 and its partner letter q ($\pi_* \cV_3 = \pi_* \cV_2 ^{\vee} =\{ \text{\lq$q$-fields'} \}$). The pairing of $p$-fields and $q$-fields are given by the residue map.

The pairing which we will discuss below will play a role to extend the Residue map \eqref{ResPairing} to define a cosection for a degeneration of \eqref{LGObst} to \eqref{Obst} in Section \ref{M_degen}.

Note that  
$[P \xrightarrow{d_P} Q^\vee]$ represents $\RR \pi _* (\cV _2 \ot \omega _{\fC} )$. Here $d_P :=   - d_Q^\vee$ due to the shifting.
This yields the cochain map realization of Serre-Grothendieck-Verdier duality $\RR \pi _* (\cV _2 \ot \omega _{\fC})[1] \xrightarrow{\sim} \RR \cH om ( \RR \pi_* \cV_2 ^\vee , \cO _{\fBo})$, 
which in turn gives rise to a cochain map realization of $\RR \pi _* (\cV _2 \ot \omega _{\fC}) [1] \ot  \RR \pi_* \cV_2 ^\vee \xrightarrow{\RR\mathrm{es} }  \cO _{\fBo}$ as
 \[ \xymatrix{ \ar[d] P \ot Q \ar[r] & P^\vee \ot P \oplus Q^\vee \ot Q \ar[d]_{\text{sum of}}^{\text{pairings}}  \ar[r] & Q^\vee \ot P^\vee \ar[d] \\
                      0      \ar[r]  & \cO_{\fBo} \ar[r] & 0 . } \]
By taking the $0$-th cohomology-level map of $\RRes$ above, we note that the parings restricted to  $\Ker d_P$, $\Ker d_Q$ are the residue
parings, i.e., the following diagram commute
\begin{align}\label{Pairing:Res}  \xymatrix{ P^\vee \ot \Ker d_P \oplus Q^\vee \ot \Ker d_Q \ar[r]^{\ \ \ \ \ \ \ \ \ \ \ \text{sum of parings}} \ar[d] &  \cO _{\fBo} .   \\
                        \RR ^1 \pi_* \cV _2 ^\vee   \ot \RR ^0 \pi _* (\cV_2 \ot \omega _{\fC})  \oplus \RR ^1 \pi_* (\cV _2  \ot \omega _{\fC}) \ot \RR ^0 \pi _* \cV_2 ^\vee
                          \ar[ur]^{\ \ \ \ \ \ \ \ \Res} & } \end{align}

\subsubsection{Set-up 1}\label{M_degen}
In this section, we will construct a degeneration using the preparations in Section \ref{CochainReal} and \ref{Sect:Res} 
in order to apply Corollary \ref{CoVir:Ch} for both stable quasimap spaces and stable LG quasimap spaces at central and generic points, respectively.

We consider \[   U_{PQ}:= U^{\ke} \times _{\fBo} \tot P  \times _{\fBo} \tot Q \subset \tot A_1 \times_{\fBo} \tot P \times_{\fBo} \tot Q  .\] 
Let $$p: F : =(B_1 \oplus Q^\vee \oplus Q ) |_{\UPQ} \ra \UPQ$$ be a vector bundle on $\UPQ$, defined via the natural map $\UPQ \ra \fB^{\circ}$.  
Let $ \tilde{F} \ra \tilde{U}_{PQ}$ be the pullback of $F$ by the projection map  $\tilde{U}_{PQ} :=\UPQ\times \mathbb{A}^1 \ra \UPQ$.
Let  $\kl$ be the coordinate of $\mathbb{A}^1$, let
$\lan\ ,\ \ran _{P}$ denote the pairing $P^\vee|_{\tilde{U}_{PQ}} \ot_{\cO_{\tilde{U}_{PQ}}} P|_{\tilde{U}_{PQ}} \to \cO _{\tilde{U}_{PQ}}$, and let $\lan \ ,\  \ran _{Q}$ be
the similar pairing for $Q$.
We consider a section $\kb$ of $\tilde{F}$ and a cosection $\sigma$ of $\tilde{F}|_{Z(\beta)}$ defined by
\begin{align} 
  \kb  & := (d_{A_1} \circ t_{A_1} ,\kl  d_P\circ  t_P,  \phi _{A_1} - \kl t_Q) ; \nonumber      \\
 \sigma  & := \lan   \bar{\phi}_{B_1}\circ \pr_{B_1} , t_P  \ran _P  + \lan  \mathrm{id}_{Q^\vee} \circ \pr_{Q^\vee} ,  t_Q \ran_Q +  \lan - d_Q \circ \pr_{Q}  , t_P \ran _{P}      
             \nonumber \\
         & = \lan  \bar{\phi}_{B_1} \circ \pr_{B_1}  - d_Q \circ \pr_{Q} , t_P \ran _{P} + \lan \mathrm{id}_{Q^\vee} \circ \pr_{Q^\vee} , t_Q \ran _Q  . \label{def:alpha} \end{align}
Here we suppress various pullback notation: for example the first term in $\kb$,
$d_{A_1}$ is $d_{A_1}|_{\tilde{F}}$. 
Let $\tilde{p}: \tot \tilde{F}_{|_{Z(\beta)}} \ra Z(\beta) $. Then we have the following.

 \begin{Cor}\label{van}
 $\tilde{p}^* \sigma \circ t_{\tilde{F}} = 0$ on the stack $C_{\Zb} \tilde{U}_{PQ}$.
 \end{Cor}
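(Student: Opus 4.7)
The plan is to reduce to a local chart of $\tilde U_{PQ}$ on which $\bar\phi_{B_1}$ admits the extension $\phi_{B_1}^{loc}$, build a local cosection $\sigma^{loc}$ of $\tilde F$ by the same formula as \eqref{def:alpha} with $\bar\phi_{B_1}$ replaced by $\phi_{B_1}^{loc}$, and then prove the stronger identical vanishing $\sigma^{loc}\circ\kb\equiv 0$ on the chart. First I would observe that $Z(\kb)\subset (Q^\ke_{V_1}\times_{\fBo}\tot P\times_{\fBo}\tot Q)\times\mathbb{A}^1$: the first component of $\kb$ is $d_{A_1}(t_{A_1})$, whose vanishing cuts out precisely the condition that $t_{A_1}$ lifts to a section of $\pi_*\cV_1$, i.e.\ a point of $Q^\ke_{V_1}\subset U^\ke$. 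Because $\sigma^{loc}$ restricts to $\sigma$ on $\tilde F|_{Z(\kb)}$, the local vanishings of $\tilde p^*\sigma^{loc}\circ t_{\tilde F}$ on the cone will glue to the desired global statement for $\sigma$.

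To compute $\sigma^{loc}\circ\kb$ I would substitute $\kb=(d_{A_1}t_{A_1},\,\kl d_P t_P,\,\phi_{A_1}-\kl t_Q)$ into the formula and split by powers of $\kl$. The $\kl$-free part, paired with $t_P$, becomes $\langle \phi_{B_1}^{loc}(t_{A_1};\,d_{A_1}t_{A_1}) - d_{A_3}(\phi_{A_1}),\,t_P\rangle_P$, which vanishes by Lemma \ref{key:com} (using $d_Q=d_{A_3}$). The $\kl$-linear part collects the cross-terms $\kl\langle d_Q t_Q,t_P\rangle_P$ and $\kl\langle d_P t_P,t_Q\rangle_Q$; these should cancel using the adjointness $d_P=-d_Q^\vee$ together with the compatibility of the pairings $\langle\,,\,\rangle_P$ and $\langle\,,\,\rangle_Q$ packaged in the Grothendieck--Verdier diagram \eqref{Pairing:Res}. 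This yields $\sigma^{loc}\circ\kb\equiv 0$ on the entire chart, not merely on the substack where the original $\sigma$ lives.

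With that identity in hand the vanishing on the cone is formal: it says that $\sigma^{loc}|_{Z(\kb)}\in\tilde F^\vee|_{Z(\kb)}$ is sent to $0$ under the natural surjection $\tilde F^\vee|_{Z(\kb)}\twoheadrightarrow I/I^2$ induced by $\kb$ (with $I$ the ideal of $Z(\kb)$ in the chart), so the fiber-linear function $\tilde p^*\sigma^{loc}\circ t_{\tilde F}$ restricts to $0$ on $C_{Z(\kb)}\tilde U_{PQ}$ over the chart; equivalently, one may use the graph-closure trick employed in the proof of Lemma \ref{taut:lemma}. Since $\tilde p^*\sigma\circ t_{\tilde F}$ is an intrinsic function on the open part of $|\tilde F|$ over the restricted locus, and since $\sigma^{loc}$ and $\sigma$ agree on $\tilde F|_{Z(\kb)}$, the local vanishings glue to the required global identity.

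I expect the main technical hurdle to be the sign bookkeeping in the second paragraph, i.e.\ verifying that the cochain-level Grothendieck--Verdier conventions behind \eqref{Pairing:Res} really translate $d_P=-d_Q^\vee$ into $\langle d_Q t_Q,t_P\rangle_P = -\langle d_P t_P,t_Q\rangle_Q$ on the nose. Once that cross-term cancellation is pinned down, the remaining steps closely follow the pattern already established in \S\ref{taut} and Lemma \ref{key:com}.
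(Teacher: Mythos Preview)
Your approach is essentially the same as the paper's: extend $\sigma$ locally to $\sigma^{loc}$ (the paper calls it $\ka^{loc}$), verify $\sigma^{loc}\circ\kb=0$ via Lemma~\ref{key:com}, and then pass to the normal cone as in Lemma~\ref{taut:lemma}. Your write-up is in fact more careful than the paper's, which simply asserts $\ka^{loc}\circ\kb=\langle \phi^{loc}_{B_1}\circ d_{A_1}\circ t_{A_1}-d_Q\circ\phi_{A_1},\,t_P\rangle_P$ and suppresses the $\kl$-linear cross-terms you isolated; your concern about the ``main technical hurdle'' is unwarranted, since the cancellation $\langle d_Q t_Q,t_P\rangle_P+\langle d_P t_P,t_Q\rangle_Q=0$ follows immediately from the definition $d_P:=-d_Q^\vee$ by adjointness of the pairings, with no need to invoke \eqref{Pairing:Res}.
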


\begin{proof}
Consider the local extension $\ka ^{loc}$ of $\sigma$  by replacing $ \bar{\phi}_{B_1} $ with $\phi_{B_1}^{loc}$.
Note that $\ka ^{loc}\circ \beta = \lan \phi ^{loc}_{B_1} \circ d_{A_1} \circ t_{A_1}- d_Q \circ \phi _{A_1} , t_P \ran _P  $  which is zero by Lemma \ref{key:com}. 
Applying  $\ka ^{loc}\circ \beta =0$ to the deformation to the normal cone $C_{\Zb} \tilde{U}_{PQ}$ as in the proof of Lemma \ref{taut:lemma}, we conclude the proof.
\end{proof}

We consider the composite $d\beta$ of maps  $\cT_{\tilde{U}_{PQ}/\fB} |_{\Zb} \ra \cT_{\tilde{F} /\fB}|_{\Zb} \to \tilde{F}|_{\Zb}$ of 
the differential of $\beta$ relative to $\fB$ and the natural projection. Similarly we have
the composite $d_{{\bf k}}\beta : \cT_{\tilde{U}_{PQ}/{\bf k}} |_{\Zb} \ra \cT_{\tilde{F} /{\bf k}}|_{\Zb} \to \tilde{F}|_{\Zb}$ using the differential 
of $\beta$ relative to ${\bf k}$ and the natural projection. 
In the above proof we 
have shown that $\ka ^{loc}\circ \beta  = 0$.
Applying the chain rule to $\ka ^{loc} \circ \beta = 0$, we note that  $\sigma \circ d_{{\bf k}}\beta = 0$. Therefore
$\sigma$ restricted to $\tilde{F}|_{\Zb}$ is factored by some $\bar{\sigma}, \tilde{\sigma}$ as in a following commuting diagram:
 \begin{align}\label{Res:rest}   \xymatrix{   \cT_{\tilde{U}_{PQ}/\fB} |_{\Zb}  \ar[r]^{d\beta}  \ar[d]  &  \tilde{F}|_{\Zb}  \ar[r]^{}  \ar[d]_{=}
        &     \mathrm{Coker} \, d\beta \ar@/^3pc/[dd]^{\bar{\sigma}}       \ar[d]
             \\ 
              \cT_{\tilde{U}_{PQ}/{\bf k}} |_{\Zb}  \ar[r]^{d_{{\bf k}}\beta}   &  \tilde{F}|_{\Zb}  \ar[r]^{}  \ar[dr]_{\sigma}
        &     \mathrm{Coker} \, d_{{\bf k}}\beta \ar[d]^{\tilde{\sigma}}       \\
          &               &  \cO_{\Zb}      .    }   \end{align}

\subsubsection{The perfect obstruction theory for $Q_X^{\ke}$}\label{sect:surj} On the other hand, as shown \cite{CKW}, we can see that
the perfect obstruction theory 
 \eqref{Obst} has an explicit description on $Q_X^{\ke} := Q^{\ke}_{g, k} ( \Zs , d)$
as follows. 

First there is  a natural commuting diagram on the universal curve on $\fB^{\circ}$
\[ \xymatrix{\Sym^{\star - 1 } \cV_1 \ot \cV_1 \ar[d]_{prod}  \ar[r] &  \Sym^{\star - 1} \cV_1 \ot \cA_1 \ar[d]_{prod}  \ar[r]^{\mathrm{id}\ot d_{\cA_1}} &   \Sym^{\star - 1} \cV_1 \ot \cB_1 \ar[d]  \\ 
               \Sym^{\star} \cV_1 \ar[r] \ar[d]_{f_{\cV_1}}  &           \Sym^{\star} \cA_1 \ar[r]  \ar[d]_{f_{\cA_1}}  &   \Coker _1  \ar[d]_{f_{\cB_1}|_{\Coker _1}} \\ 
              \cV_3 \ar[r] &             \cA_3  \ar[r]_{d_{\cA_3}} & \cB _3  
               } \]
              where $\star -1$ ranges from $0$ to $d_f -1$ and $prod$ is induced from the quotient maps from tensor products to symmetric products.
Note that the compositions gives rise to a $\pi$-acyclic resolution of ${\bf u} ^* ( \cP \times _G d\uf ^{\vee})$ as
\begin{align}\label{K_dw}  \xymatrix{ 0 \ar[r] & \cV_1 |_{\fC} \ar[d]_{{\bf u}^*( \cP \times _G d\uf ^{\vee}) } \ar[r] & \cA_1 |_{\fC} \ar[d] \ar[r] & \cB_1|_{\fC} \ar[d] \ar[r] & 0  \\
    0 \ar[r] & \cV_3|_{\fC} \ar[r] & \cA_3|_{\fC} \ar[r] & \cB_3|_{\fC}  \ar[r] & 0  } \end{align}
          where $\fC$ is the universal curve on $Q^{\ke}_X$.  

Hence,  \eqref{Obst} is representable by the three-term complex at amplitude $[0, 1, 2]$
\[ A_1|_{Q_X^{\ke} } \xrightarrow{(d_{A_1} , \pi_*(f_{\cA _1} \circ prod ) (\exp t_{V_1} \ot -))|_{Q_X^{\ke}} } B_1|_{Q_X^{\ke} } \oplus Q|_{Q_X^{\ke} }  
\xrightarrow{ (\bar{\phi} _{B_1} - d_Q)|_{Q_X^{\ke}}} P^\vee|_{Q_X^{\ke} } . \] 
By Theorem 4.5.2 of \cite{CKM}, we know that  \eqref{Obst} is the dual of a perfect obstruction theory 
and hence the map $(\bar{\phi} _{B_1} - d_Q)|_{Q_X^{\ke}}$ above is surjective.
Let $K$ be the kernel of $(\bar{\phi} _{B_1} - d_Q)|_{Q_X^{\ke}}$.   
The three term complex is is quasi-isomorphic to $[A_1|_{Q_X^{\ke} }  \ra K ]$, which yields a concrete realization of the relative
obstruction theory for $Q_X^{\ke}$. Thus
\begin{equation}\label{vir exp of  QeX} [Q^{\ke}_X]^{\vir} = 0^!_K[C_{Q^{\ke}_X} U^{\ke}] , \end{equation}
where the inclusion $C_{Q^{\ke}_X} U^{\ke} \subset K$ is obtained by the section $(d_{A_1} \circ t_{A_1}, \phi _{A_1})$.

\subsubsection{Set-up 2} \label{Setup2}

We consider the Koszul complex $\{ \tilde{p}^* \sigma, t_{\tilde{F}}\}$ on $C_{\Zb} \tilde{U}_{PQ}$. Recall the notation $P=B_3^{\vee}$ and $Q=A_3$ made in Section
\ref{Sect:Res}.

\begin{Lemma}
The Koszul complex $\{ \tilde{p}^* \sigma, t_{\tilde{F}}\}$ is strictly exact off the zero locus of $\sigma$ and $\kb$, which is $Q^{\ke}_X  \times \mathbb{A}^1$.
\end{Lemma} 
\begin{proof} The first part is obvious by Lemma \ref{lem: strict off} (2) since the locus of $t_{\tilde{F}}=0$ in the cone is the vertex $Z(\beta )$ of the cone. 
It remains to explain why $Z(\beta ) \cap Z(\sigma)$ coincides with $Q^{\ke}_X  \times \mathbb{A}^1$. This easily follows as: when $\beta = \sigma = 0$,
we have
$t_Q =0$ by the second term of $\sigma$ in \eqref{def:alpha}; $\phi _{A_1}  =0$  by  the third term of $\kb$ and $t_Q=0$; 
$t_P=0$ by the first term in $\sigma$ in \eqref{def:alpha} and the surjectivity of $\bar{\phi} _{B_1} - t_Q$ over $Q^{\ke}_X$ (see \S \ref{sect:surj} for the surjectivity).
The converse is obvious so that the locus of $\beta = \sigma = 0$ coincides with the locus $Q^{\ke}_X$.
\end{proof}

For $\kl \in \mathbb{A}^1$, let $j_{\kl}: C_{Z(\beta _{\kl})} \UPQ \subset (C_{Z(\beta)} \tilde{U}_{PQ}) |_{\kl}$ be the natural closed immersion.
Recall $\tilde{p}: \tot \tilde{F}_{|_{Z(\beta)}} \ra Z(\beta)$ denotes the projection and  let $p_{\kl}: \tot F_{|_{Z(\beta _{\kl} )}} \ra Z(\beta_{\kl} )$
be projections. 
Now we have 
\begin{align} 
         & \kl ^! \ch ^{C_{Z(\beta)} \tilde{U}_{PQ}}_{Q^{\ke}_X \times \mathbb{A}^1} \{ \tilde{p}^* \sigma, t_{\tilde{F}}\} \cap [C_{Z(\beta)} \tilde{U}_{PQ} ]   \nonumber \\
          =  &    \ch ^{C_{Z(\beta)} \tilde{U}_{PQ} |_{\kl}}_{Q^{\ke}_X }  \{ p^*_{\kl} \sigma, t_F\} \cap \kl^![C_{Z(\beta)} \tilde{U}_{PQ} ]  \nonumber  \\ 
          =  &   \ch ^{C_{Z(\beta)} \tilde{U}_{PQ}|_{\kl}}_{Q^{\ke}_X} \{ p^*_{\kl} \sigma, t_F\} \cap j_{\kl *} [C_{Z(\beta _\kl )} \UPQ]              \nonumber  \\
          = &           \ch _{Q^{\ke}_X}^{C_{Z(\beta _{\kl})} \UPQ} \{ p^*_{\kl} \sigma, t_F\} \cap [C_{Z(\beta _{\kl})} \UPQ]  .  \label{Vir:Exp} 
         \end{align}
Here the first equality follows by the fact that $\ch _{Q^{\ke}(X) \times \mathbb{A}^1} ^{C_{Z(\beta)} \tilde{U}_{PQ}} \{ \tilde{p}^* \sigma , t_{\tilde{F}}\}$ is
a bivariant class so that it commutes with the refined Gysin homomorphism $\kl ^!$;
the second equality follows from Lemma 3.6 of \cite{CKW}; and
the third equality follows from the compatibility with proper pushforward.

\subsubsection{Proof of Theorem \ref{Comp:Thm}}
We prove  Theorem \ref{Comp:Thm}, i.e., Conjecture \ref{conj} by showing that \eqref{Vir:Exp} for $\lambda =0, 1$ 
becomes LHS and RHS of \eqref{Vir:Eq}, respectively, up to a common invertible factor.


Case $\lambda =1$: Let $U_P:= U^{\ke} \times _{\fBo} \tot P$.
First note that $Z(\beta |_{\kl =1} ) \xrightarrow{\cong} LGQ'$ by a projection. 
Under this isomorphism,
$C_{Z(\beta |_{\kl =1})} \UPQ \cong  C_{LGQ'} U_P\ti _{\fBo} Q$ as cones over $LGQ'$.
Let $\cF^1 := (B_1 \oplus Q^\vee )|_{LGQ'}$ and let $p: \cF^1 \to LGQ'$ be the projection.
Then $$\sigma |_{\kl =1, LGQ'}: (B_1\oplus Q^{\vee}\oplus Q)|_{LGQ'} \to \cO_{LGQ'}$$ coincides with 
$dw_{LGQ'} \oplus 0$ by  \eqref{dwq: 2}, 
 \eqref{def:barphi_B}, and  \eqref{Pairing:Res}. 

Therefore, 
we have \begin{align*}  \eqref{Vir:Exp}|_{\lambda =1}  
& = \ch^{C_{LGQ'} U_P\ti _{\fBo} Q}_{Q^{\ke}_X} ( \{ p^*dw_{LGQ'}, t_{\cF ^1}\}\boxtimes _{\fBo} \{ 0, t_Q \} ) [C_{LGQ'} U_P\ti _{\fBo} Q ] \\
                                     & = (\td Q|_{Q_X^{\ke}})^{-1}  \ch^{C_{LGQ'} LGU^{\ke}}_{Q^{\ke}_X}  ( \{ p^*dw_{LGQ'}, t_{\cF ^1}\} ) \cap [C_{LGQ'} U_P]   \\
                                     & = (\td F|_{Q_X^{\ke}})^{-1} \cap [LGQ']^{\vir}_{dw_{LGQ'}} . \end{align*} 
Here the first equality is explained above, the second equality is by Proposition 2.3 (vi) of \cite{PV:A}, and the third equality is explained in (just before
Conjecture \ref{conj}) \S \ref{LG:Moduli}.

\medskip

Hence, to complete the proof, 
it is enough to show that \eqref{Vir:Exp} for $\kl =0$ after multiplication by $ (-1)^{\chi (\cV ^{\vee}_2 )} \td F$  becomes LHS of \eqref{Vir:Eq}.

\medskip

Case $\kl=0$:  Let $\beta _0 := \beta |_{\lambda = 0}$. We have 
\begin{align}   Z (\beta _0)  & = Q_X^{\ke} \times_{\fBo} \tot P \times_{\fBo} \tot Q \nonumber \\
C_{Z(\beta _{0})} U_{PQ} & =  C_{Q^{\ke}_X} U^{\ke} \times_{\fBo} \tot P \times_{\fBo} \tot Q \subset F |_{Z(\beta _0)} .      \label{inc} \end{align}
Note that there is no constraint by $\beta _0$ on the part $\tot P\times_{\fBo} \tot Q$. 

By Lemma \ref{key:com}, 
 the inclusion \eqref{inc} is the composition of
\begin{equation}\label{eqn: C K F}  C_{Q^{\ke}_X} U^{\ke} \times_{\fBo} \tot P \times_{\fBo} \tot Q \subset q_0^*K \subset F|_{Z(\beta _0)} \end{equation} where 
$q_0: Z (\beta _0) \ra Q_X^{\ke}$ is the projection.

Let $m: C_{Z(\beta _{0})} \UPQ \ra C_{Q_X^{\ke}}U^{\ke}$ and $p_0: C_{Z(\beta _{0})} \UPQ \ra Z (\beta _0) $  be projections.
On $C_{Z(\beta _{0})} \UPQ $, we obtain a commuting diagram of locally free sheaves
\[ \xymatrix{   & \cO_{C_{Z(\beta _{0})} \UPQ } \ar[ld]_{m^* t_K}  \ar[d]^{t_F}  & & & \\
 p_0^*q_0^* K \ar[r] &    p^*_0(B_1\oplus Q \oplus Q^\vee)  \ar[rrr]^{(\bar{\phi} _{B_1} - d_Q , \mathrm{id}_{ Q^{\vee}  } ) }  \ar[d]^{p_0^*\sigma} &  & &   p_0^*(P^\vee \oplus Q^\vee) \ar[dlll]^{p_0^*taut} \\
             &               \cO_{C_{Z(\beta _{0})} \UPQ }  & & & } \]
where $taut$ is the sum of the tautological paring of dual pairs $(\tot P, P^\vee)$ and  $(\tot Q, Q^\vee)$.
Note that the commutativity of the first (resp. second) triangle above follows by \eqref{eqn: C K F}  (resp. \eqref{def:alpha}). 
By applying the Conclusion in Section \ref{deformation: spl} to the above case, 
on  $C_{Z(\beta _{0})} \UPQ $ we have a deformation of the complex $\{ p_0^*\sigma , t_F \}$ 
supported on $Q^{\ke}_X$ to $\{0, m^*t_K \}\ot \{ p_0^*taut, 0 \}$ supported also on $Q^{\ke}_X$.
By this deformation, $\eqref{Vir:Exp}|_{\kl=0}$ becomes the following:
\begin{align*} &    \ch^{ C_{Z(\beta _{0})} U_{PQ} }_{Q^{\ke}_X } (\{0, m^*t_K \}\ot \{ p_0^*taut, 0 \}) \cap [C_{Z(\beta _{0})} \UPQ] \\
  = &  \ch^{ C_{Z(\beta _{0})} \UPQ }_{Q^{\ke}_X } (\{0, m^*t_K \}\ot \Lambda ^{\bullet} (P^{\vee} \oplus Q^{\vee}) \ot \Lambda ^{top} (P \oplus Q)[top]) \cap [C_{Z(\beta _{0})} \UPQ] \\
 = &  (-1)^{\chi (\cV ^{\vee}_2 )}  \td (P \oplus Q)|_{Q_X^{\ke}} ^{-1} \cdot \ch (\Lambda ^{top} (P \oplus Q)|_{Q_X^{\ke}})  \cdot \ch^{C_{Q^{\ke}_X} U^{\ke} }_{Q^{\ke}_X} \{0, t_K \} \cap  [i]  ( [C_{Z(\beta _{0})} \UPQ] ) \\
= & (-1)^{\chi (\cV ^{\vee}_2 )}  (\td F |_{Q_X^{\ke}} )^{-1} \cdot \td K  \cdot \ch^{C_{Q^{\ke}_X} U^{\ke} }_{Q^{\ke}_X} \{0, t_K \} \cap  [i]  ( [C_{Z(\beta _{0})} \UPQ ] ) \\
  = &  (-1)^{\chi ( \cV ^{\vee}_2 )}  (\td F|_{Q_X^{\ke}})^{-1} \cdot \td K \cdot  \ch^{C_{Q^{\ke}_X} U^{\ke} }_{Q^{\ke}_X} \{0, t_K \} \cap [ C_{Q^{\ke}_X} U^{\ke}]  \\
  = & (-1)^{\chi ( \cV ^{\vee}_2 )}   (\td F|_{Q_X^{\ke}})^{-1} \cdot [Q^{\ke}_X]^{\vir} ,
 \end{align*}
where $\Lambda ^{\bullet} (P^{\vee} \oplus Q^{\vee})$ is the 2-periodic Koszul-Thom complex (see \cite[Proposition 2.3 (vi)]{PV:A}), 
$top$ denotes the rank of $P\oplus Q$, and $[i]$ is the canonical orientation of the inclusion $i : C_{Q^{\ke}_X} U^{\ke} \subset C_{Z(\beta _{0})} U_{PQ}$.
The first equality is from \eqref{duality}.
The second equality is from \cite[Proposition 2.3 (vi)]{PV:A}.
The third equality is from the easy fact that $\td E = \td E^\vee \cdot (\ch \, \mathrm{det} E^\vee )^{-1}$ for a vector bundle $E$.
The fourth  equality is from that $[i]  ( [C_{Z(\beta _{0})} \UPQ ] ) =  [ C_{Q^{\ke}_X} U^{\ke}] $.
The last equality is from Corollary \ref{Gysin} and \eqref{vir exp of QeX}.

\end{document}